\documentclass[12pt]{amsart}
\usepackage{amsmath, amssymb, amsthm}
\newtheorem{example}{Example}
\usepackage{geometry}
\usepackage{graphicx}
\usepackage{tikz}
\usepackage{mathrsfs}
\usetikzlibrary{matrix}
\usetikzlibrary{arrows.meta}
\usetikzlibrary{arrows.meta,positioning}
\newtheorem{thm}{Theorem}[section]

\newtheorem{cor}[thm]{Corollary}

\newtheorem{prop}[thm]{Proposition}

\newtheorem{lem}[thm]{Lemma}
\theoremstyle{definition}
  
\newtheorem{defn}[thm]{Definition}

\newtheorem{remark}{Remark}
\usepackage{tikz}
\usepackage{placeins}
\usepackage{subcaption}
\usepackage{comment}

\newcommand{\SL}{\mathrm{SL}}
\newcommand{\Sp}{\mathrm{Sp}}

\DeclareMathOperator{\sgn}{Sgn}

\DeclareMathOperator{\Pf}{Pf}
\DeclareMathOperator{\Hom}{Hom}

\usepackage{tikz}
\usepackage{array}

\usepackage[colorlinks=true, linkcolor=blue]{hyperref}
\usetikzlibrary{arrows.meta}
\usetikzlibrary{calc}

\title{Electrical Networks and Symplectic Invariants}
\author{David Kogan}
\date{}

\begin{document}

\maketitle

\begin{abstract}
Consider a finite planar graph with positive real edge weights
and designated boundary vertices, called nodes. Such a
graph is called a circular planar electrical network. A grove is a spanning forest in which every
component contains at least one node. The connected components of a grove determine a partition of the nodes. We relate weighted grove counts
to invariant theory for the symplectic group.

To a planar electrical network $G$, we associate an
$\Sp(2n)$-invariant tensor $Z_G$.  For $\Sp(2)=\SL(2)$, we expand $Z_G$ in the Temperley--Lieb basis indexed by noncrossing matchings and relate its coefficients to the Kenyon--Wilson grove formulas. For $\Sp(4)$, we give reduction rules for superpositions of two groves and prove that the tensors indexed by $3$-noncrossing matchings form a basis of the space of $\Sp(4)$-invariant tensors.
The coefficients of $Z_G$ in this basis are
weighted counts of reduced double groves, up to normalization.
\end{abstract}

\section{Introduction}

The spanning tree model is a classical probabilistic and combinatorial model
associated with an electrical network.  Kirchhoff's matrix-tree theorem \cite{kirchhoff1847ueber}
expresses the weighted sum of spanning trees as the
determinant of the graph Laplacian with one row and column removed. Let $G$ be a finite graph embedded in a disk, with positive
edge weights and distinguished vertices on the boundary,
called \emph{nodes}, labeled $1,2,\ldots,2m$ in
clockwise order. We call $G$ a \emph{network}.
A natural extension of the spanning-tree model to this setting is the
model of \emph{groves}: spanning forests of $G$ in which
every connected component contains at least one node.
Each grove $F$ determines a noncrossing partition $\sigma(F)$ of the nodes, with
two nodes in the same block precisely when they belong
to the same component. 

For each partition $\sigma$
of the nodes, the corresponding \emph{grove partition function}
is
\[
Z[\sigma]=\sum_{F:\,\sigma(F)=\sigma} w(F),
\]
where the sum is over groves of $G$ and $w(F)$ is the
product of the edge weights of $F$.
These partition functions can be computed using the
\emph{response matrix} $L$, which maps voltages at the nodes
to the resulting boundary currents. The entry $L_{ij}$ is the current flowing into the network at node $i$ when node
$j$ is held at unit voltage and all other nodes are held
at zero voltage. Write $Z_{\mathrm{unc}}:=Z[1|2|\dots|2m]$ for the partition
function of groves in which every node lies in a distinct
component. Kenyon and Wilson showed that the normalized
partition functions $Z[\sigma]/Z_{\mathrm{unc}}$ are
polynomials in the entries of the response matrix
\cite{kenyon2008combinatorics,kenyon2011boundary}.

A central problem in spanning tree and related models is to compute
boundary connection probabilities in the scaling limit. Explicit formulas for such limits are related to multiple
$\mathrm{SLE}_{\kappa}$ connection probabilities
\cite{kenyon2011boundary,liu2025uniform}. 

Related questions arise in the dimer model. Kenyon and Wilson
obtained analogous formulas for boundary connection probabilities
in the double-dimer model in terms of boundary measurements
\cite{kenyon2011boundary}. More generally, superpositions of
$r$ dimer configurations give rise to $\SL(r)$-invariant
tensors represented by planar graphs called webs
\cite{fraser2019dimers,douglas2024dimers}.
For the triple-dimer model, connection probabilities for reduced
$\SL(3)$ webs can be computed from the boundary measurement
matrix \cite{kenyon2023planar}.

We study weighted superpositions of $n$ groves with boundary
connectivities $\sigma_1,\ldots,\sigma_n$.
Each superposition determines a graph $H$ with edge multiplicities. Assign a copy of $W=\mathbb C^{2n}$, with the standard action of $\Sp(2n)$, to each boundary node. We associate to $H$ an $\Sp(2n)$-invariant multilinear
form $T_H$ on these boundary spaces, which we call its
coloring tensor.
We define the network tensor $Z_G$ by
\[
Z_G(v_1,\ldots,v_{2m})
:=
\operatorname{Pf}\!\left(
\bigl(-L_{ij}\omega(v_i,v_j)\bigr)_{i,j=1}^{2m}
\right),
\]
where $L$ is the response matrix of $G$, $v_i\in W$, $\operatorname{Pf}$ denotes the Pfaffian,
and $\omega$ is the standard symplectic form on $W$. We can view $Z_G$ as an element of $\Hom_{\Sp(2n)}(W^{\otimes 2m}, \mathbb C)$.
We show that $Z_G$ is a normalized weighted sum of
the coloring tensors $T_H$.

When \(n=1\), \(\Sp(2)\cong \SL(2)\), the Kenyon–Wilson grove formulas lead to an expansion of \(Z_G\) in the
Temperley–Lieb basis indexed by noncrossing matchings, with coefficients given by normalized grove
partition functions. Related connections between cycle-rooted spanning forests and
$\SL(2)$ connections were developed using vector-bundle
Laplacians in \cite{kenyon2011spanning}.

For $n=2$, we construct a basis of $\mathrm{Hom}_{\Sp(4)}((\mathbb C^4)^{\otimes{2m}}, \mathbb C)$. Our construction
adapts the double-grove reduction of Gao, Lam, and Xu
\cite{gao2025electrical} to groves whose boundary blocks
are singletons or pairs. The resulting graphs are indexed
by $3$-noncrossing matchings: perfect matchings of the
boundary nodes with no three pairwise crossing chords.
Replacing each crossing by a $4$-valent vertex gives
the corresponding planar graph. The coloring tensors and reduced double-grove partition
functions appearing below are defined precisely in Section~\ref{sec:section_2} and Section \ref{sec:sec_4}. Our main result is

\begin{thm}\label{thm:sp4_grove_expansion1}
Let $G$ be a planar network with $2m$ boundary
vertices, and suppose $n=2$. Then
\[
Z_G
=
\frac{1}{Z_{\mathrm{unc}}^2}
\sum_{\xi\in\mathrm{TC}(2m)} Z[\xi]\,T_\xi,
\]
where $\mathrm{TC}(2m)$ denotes the set of $3$-noncrossing
matching graphs on $2m$ boundary vertices and $T_\xi$ is the
associated coloring tensor. The coefficient $Z[\xi]$
is the reduced double-grove partition function: each
distinct uncolored superposition contributes its network
weight multiplied by the coefficient of $\xi$ in its
grove reduction.
\end{thm}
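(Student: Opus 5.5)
The plan is to prove the identity in two stages: first a general superposition expansion valid for every $n$, then a reduction step specific to $\Sp(4)$.

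\textbf{Stage 1: expand the Pfaffian.} With $\omega=\sum_{a=1}^{n}(e_a^*\wedge f_a^*)$, write $\omega(v_i,v_j)=\sum_{a}\omega_a(v_i,v_j)$ as a sum of $n$ rank-two pieces. The Pfaffian of a sum of skew matrices expands as a sum over ordered set partitions of the index set:
\[
\Pf\Bigl(\sum_a A^{(a)}\Bigr)=\sum_{S_1\sqcup\cdots\sqcup S_n=[2m]}\pm\prod_a \Pf\bigl(A^{(a)}_{S_a}\bigr).
\]
Each $A^{(a)}_{S}=(-L_{ij}\omega_a(v_i,v_j))$ is a Hadamard product of $-L_S$ with a rank-two form. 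Its Pfaffian is therefore a sum over perfect matchings of $S$ of products of $-L_{ij}$ times $\SL(2)$ brackets in the $a$-th coordinate pair. For $n=1$ the Kenyon--Wilson grove formulas re-sum these matching terms into grove partition functions divided by $Z_{\mathrm{unc}}$; this is exactly the $\Sp(2)$ case stated earlier in the paper, $Z_G=Z_{\mathrm{unc}}^{-1}\sum_\sigma Z[\sigma]\,\mathrm{TL}_\sigma$. Multiplying the $n$ layers shows that $Z_G$ is $Z_{\mathrm{unc}}^{-n}$ times a sum over $n$-tuples of groves whose blocks are singletons or pairs, with weight $\prod w(F_a)$. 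Summing over which coordinate pair each node uses is exactly the definition of the coloring tensor $T_H$ of the superposition $H=F_1\cup\cdots\cup F_n$. So the first step establishes $Z_G=Z_{\mathrm{unc}}^{-n}\sum_H w(H)\,T_H$, where $H$ runs over distinct uncolored superpositions. I take this to be the general result of Section~\ref{sec:section_2}; the only real care needed is checking that the signs from the Pfaffian expansion agree with the orientation conventions built into $T_H$.

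\textbf{Stage 2: reduce to the basis for $n=2$.} Specialize to $n=2$ and use the double-grove reduction rules from Section~\ref{sec:sec_4}. These are local skein-type relations among coloring tensors: contracting degree-two paths, resolving multiedges, and removing internal cycles and bigons. Each rule expresses $T_H$ as an integer combination of coloring tensors of simpler graphs, and the combinatorial fact that makes them valid is that the relations hold at the level of colorings. Applying the rules repeatedly, with a complexity such as the number of internal vertices and edges decreasing at each step, writes every $T_H$ as $\sum_\xi c_{H,\xi}T_\xi$ with $\xi\in\mathrm{TC}(2m)$. The coefficients $c_{H,\xi}$ must be independent of the order in which the rules are applied. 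This follows once we know that $\{T_\xi\}$ is a basis, which the paper proves: the coefficients of a vector in a basis are unique, so every reduction order gives the same $c_{H,\xi}$. Substituting into the Stage 1 expansion and exchanging the order of summation gives $Z[\xi]=\sum_H w(H)\,c_{H,\xi}$, which is the definition of the reduced double-grove partition function, and this yields the theorem.

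\textbf{Main obstacle.} I expect the hardest part to be the termination and completeness of the reduction. Every superposition of two pair-or-singleton groves must reduce to $3$-noncrossing matching graphs, with no irreducible configuration left over. I would first use planarity of $G$ to bound the shape of the faces of $H$. Then I would show, following the argument of Gao--Lam--Xu, that any reduced planar graph with $4$-valent interior vertices coming from two groves is the crossing-resolution of a $3$-noncrossing matching. A secondary obstacle is keeping the Pfaffian signs consistent in Stage 1.
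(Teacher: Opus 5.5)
\textbf{Verdict.} Your overall route is the paper's. On each basis tensor $v_{\mathscr P}$, Lemma~\ref{lem:sp4_test_evaluation} and Theorem~\ref{thm:grove_expansion} give $Z_{\mathrm{unc}}^2Z_G(v_{\mathscr P})=\sum_H w(H)\,T_H(v_{\mathscr P})$. The sign $\epsilon_{\mathscr P}$ is built into $T_H$, so your sign check is automatic. Proposition~\ref{prop:coloring_proposition} then shows that each move preserves signed coloring counts. There is, however, a genuine gap in the step you call the main obstacle, and a circularity in how you handle order-independence.

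\textbf{The gap: valences of reduced terms.} The theorem needs this step because $Z[\xi]$ is indexed only by $\xi\in\mathrm{TC}(2m)$, so every terminal term must be a $3$-noncrossing matching graph. Your sketch takes $4$-valence of reduced terms for granted. Reducedness gives only lower bounds: interior degree $\ge 4$ (Rules (3)--(7)) and node degree $\ge 1$ (Rule (2)). Nothing in your plan rules out a reduced term with a $5$-valent interior vertex or a node of degree $2$, and planarity alone does not control valences. The missing idea is a conserved count, the excess
\[
\operatorname{exc}(H)=2|E|-4|V_{\mathrm{int}}|-|N|.
\]
\begin{itemize}
\item Each $F_\ell$ has $|V|-c_\ell$ edges, and $c_1+c_2=3m$ because every node is paired in one layer and a singleton in the other. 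Hence $\operatorname{exc}=0$ for every double-grove superposition.
\item Every nonzero move deletes one interior vertex and two edges (with multiplicity) and never merges nodes, so $\operatorname{exc}$ stays $0$.
\item In a reduced term all summands $d(v)-4$ and $d(i)-1$ are nonnegative, hence all are zero.
\end{itemize}

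\textbf{The strand argument still needs its own proof.} Even with $4$-valence established, you cannot simply quote Gao--Lam--Xu, since they obtain the matching on the medial graph, not on the primal superposition. You must rule out closed strands, self-crossings, double crossings and three pairwise crossings. The paper does this by taking an innermost region bounded by strand segments with $k\le 3$ corners. Euler's formula then gives $\sum_Q(4-\deg Q)\ge 4c-k>0$, which forces a loop, bigon or triangle and contradicts reducedness.

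\textbf{Circularity in order-independence.} You invoke the basis theorem to make the coefficients independent of the reduction order. But the paper deduces Corollary~\ref{cor:sp4_basis} from this very theorem. Independence is not needed here: fix one complete reduction sequence for each $H$, prove the identity for that choice, and derive independence afterwards.

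\textbf{Smaller corrections.}
\begin{itemize}
\item The layerwise resummation requires Theorem~\ref{thm:grove_expansion} with a nonempty singleton set $Q$, not the all-paired Proposition~\ref{prop:sl2_grove_expansion}.
\item A degree-two interior vertex must be deleted with factor $2$, not contracted. Its two edges must lie in different layers, since otherwise it would be a nodeless component of one grove. So it is a leaf in each layer, and contracting would wrongly join its two neighbours in one layer.
\end{itemize}
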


As a corollary, the tensors $T_\xi$ form a basis of
$\operatorname{Hom}_{\Sp(4)}(W^{\otimes 2m},\mathbb C)$. The same matchings index the bush basis of the degree-two
component of the grove algebra \cite{gao2025electrical}.
In our construction, the $3$-noncrossing matching graphs
arise by reducing superpositions on the primal network, while
in the bush basis construction, they arise on the medial graph.

It would be interesting to extend our construction to
incorporate all grove partition functions simultaneously,
beyond those indexed by partitions with singleton and
pair blocks. Further connections between electrical networks, symplectic
groups, and Lagrangian Grassmannians were studied in
\cite{lam2015electrical,lam2018electroid,bychkov2021electrical}.

Kuperberg constructed bases for invariant tensors for rank-two Lie
algebras using planar graphs called webs, together with
local graphical relations \cite{kuperberg1996spiders}. Kuperberg constructed a $\Sp(4)$
web basis \cite{kuperberg1996spiders} for $\mathrm{Hom}_{\Sp(4)}(W^{\otimes 2m}, \mathbb C)$. Using his $4$-valent
vertex, this web basis is indexed by the same
$3$-noncrossing matching diagrams. The coloring tensors in our
construction generally differ from the corresponding
web tensors, giving two bases of the same invariant space.

The proof combines the Kenyon--Wilson grove formulas with the pairing
expansion of the Pfaffian and the reduction theory of double groves.
Section~2 introduces the network tensor and its pairing expansion,
Section~3 treats the \(\SL(2)\) case, and Section~4 develops the
double-grove construction and proves the \(\Sp(4)\) expansion and basis
theorem.

\section{Electrical networks and the grove partition functions} \label{sec:section_2}

We recall the basic setup for circular planar electrical
networks; see, for example, \cite{curtis1998circular}. Let $G=(V,E)$ be a finite planar graph embedded in a disk, with
boundary vertices
\(
N=\{1,\ldots,2m\}
\) called \emph{nodes} 
lying on the boundary of the disk in cyclic order, and let
$I=V\setminus N$ denote the set of interior vertices.
Each edge $e\in E$ is assigned a positive conductance $c_e$.

The weighted graph Laplacian $\Delta$ is the $V\times V$ matrix
\[
\Delta_{ij}
=
\begin{cases}
\displaystyle\sum_{k\sim i}c_{ik}, & i=j,\\
-c_{ij}, & i\sim j,\\
0, & \text{otherwise}.
\end{cases}
\]
Split $\Delta$ into blocks according to $V=I\sqcup N$. The response matrix
(or Dirichlet-to-Neumann matrix) of the network is the Schur complement
\[
L=\Delta_{NN}-\Delta_{NI}\Delta_{II}^{-1}\Delta_{IN}.
\]
We assume that every connected component of $G$ contains a
boundary node, which implies that $\Delta_{II}$ is invertible. \(L_{ij}\) is the current flowing into the network at node \(i\) when boundary node \(j\) is held at one volt and all other boundary nodes are held at zero volts.

A \emph{grove} of $G$ is a spanning forest in which every connected
component contains at least one boundary node (but possibly no edges).  Its boundary
connectivity is the partition of $N$ induced by its connected
components.  For a partition $\tau$ of $N$, we write $Z[\tau]$ for
the weighted sum of groves with boundary connectivity $\tau$, and
we write $Z_{\mathrm{unc}}$ for the partition function of groves in
which every boundary vertex lies in a distinct component. If we consider the graph where all nodes are identified then applying the matrix-tree theorem gives $Z_{\mathrm{unc}}=\det \Delta_{II}$. See figure \ref{fig:grove-example}.

\begin{figure}[ht]
\centering
\begin{tikzpicture}[
    edge/.style={black,line width=.85pt},
    unused/.style={black!25,line width=.6pt},
    every node/.style={font=\small}
]
\foreach \panel/\offset in {0/0,1/4.1,2/8.2}{
\begin{scope}[xshift=\offset cm]

    \draw[black!35,line width=.4pt]
        (0,0) circle ({sqrt(2)});

    \draw[unused]
        (-1,1)--(-.48,.48)
        (1,1)--(.48,.48)
        (1,-1)--(.48,-.48)
        (-1,-1)--(-.48,-.48);
    \draw[unused]
        (-.48,.48)--(.48,.48)
        --(.48,-.48)--(-.48,-.48)--cycle;

    \ifnum\panel=0
        \draw[edge]
            (-1,1)--(-.48,.48)
            (1,1)--(.48,.48)
            (1,-1)--(.48,-.48)
            (-1,-1)--(-.48,-.48);
        \draw[edge]
            (-.48,.48)--(.48,.48)
            --(.48,-.48)--(-.48,-.48)--cycle;

        \node[above] at (0,.53) {$a$};
        \node[right] at (.53,0) {$b$};
        \node[below] at (0,-.53) {$c$};
        \node[left]  at (-.53,0) {$d$};

        \node at (0,-1.9) {(a) Network $G$};
    \fi

    \ifnum\panel=1
        \draw[edge]
            (-1,1)--(-.48,.48)--(.48,.48)
            --(.48,-.48)--(1,-1);
        \draw[edge]
            (-1,-1)--(-.48,-.48);

        \node at (0,-1.9) {(b) $13\mid2\mid4$};
    \fi

    \ifnum\panel=2
        \draw[edge]
            (-1,1)--(-.48,.48)
            (1,1)--(.48,.48)
            (1,-1)--(.48,-.48)
            (-1,-1)--(-.48,-.48);

        \node at (0,-1.9) {(c) $1\mid2\mid3\mid4$};
    \fi

    \fill (-1,1) circle (1.5pt);
    \fill (1,1) circle (1.5pt);
    \fill (1,-1) circle (1.5pt);
    \fill (-1,-1) circle (1.5pt);
    \fill (-.48,.48) circle (1.5pt);
    \fill (.48,.48) circle (1.5pt);
    \fill (.48,-.48) circle (1.5pt);
    \fill (-.48,-.48) circle (1.5pt);

    \node[above left]  at (-1,1) {$1$};
    \node[above right] at (1,1) {$2$};
    \node[below right] at (1,-1) {$3$};
    \node[below left]  at (-1,-1) {$4$};

\end{scope}
}
\end{tikzpicture}
\caption{An electrical network and two of its groves.
The inner edges have conductances $a,b,c,d$, as indicated,
and the four outer edges have conductance $1$.
The groves shown in (b) and (c) contribute weights $ab$
and $1$ to $Z[13\mid2\mid4]$ and $Z_{\mathrm{unc}}$,
respectively. Unused edges are shown in light gray. For this network $Z[13\mid2\mid4]=ab+cd+abc+abd+acd+bcd$.}
\label{fig:grove-example}
\end{figure}

Kenyon and Wilson \cite{kenyon2011boundary} showed that grove partition functions can be recovered from the response matrix. More precisely, the normalized grove partition functions \(Z[\tau]/Z_{\mathrm{unc}}\) are integer-coefficient polynomials in the matrix entries $L_{ij}$.

Let $W=\mathbb C^{2n}$ carry the standard symplectic form
$\omega$. We choose a symplectic basis
\(
e_1,\ldots,e_n,f_1,\ldots,f_n
\)
such that
\[
\omega(e_i,f_j)=\delta_{ij},\qquad
\omega(f_i,e_j)=-\delta_{ij},\qquad
\omega(e_i,e_j)=\omega(f_i,f_j)=0.
\]

For a pure tensor
$v=v_1\otimes\cdots\otimes v_{2m}\in W^{\otimes 2m},$
define the \(2m\times 2m\) matrix \(B_G(v)\) by
\[
(B_G(v))_{ij}
:=
-L_{ij}\omega(v_i,v_j).
\]
Since \(L\) is symmetric and \(\omega\) is skew-symmetric, \(B_G(v)\) is
skew-symmetric. We then set
\[
Z_G(v_1\otimes\cdots\otimes v_{2m})
:=
\operatorname{Pf}(B_G(v)).
\]
The Pfaffian expression is multilinear in \(v_1,\ldots,v_{2m}\), and
therefore extends uniquely to a linear functional
\[
Z_G\in
\operatorname{Hom}\!\left(W^{\otimes 2m},\mathbb C\right).
\]

Thus, for any \(g\in \Sp(2n,\mathbb C)\),
\[
(B_{G}(gv))_{ij}
=
-L_{ij}\,\omega(gv_i,gv_j)
=
-L_{ij}\,\omega(v_i,v_j)
=
(B_{G}(v))_{ij}.
\]
It follows that
\[
 Z_{G}(gv_1\otimes\dots\otimes gv_{2m})
=
 Z_{G}(v_1\otimes \dots\otimes v_{2m}),
\]
so that
\(
 Z_{G}
\in
\Hom_{\Sp(2n,\mathbb C)}
\left(W^{\otimes 2m},\mathbb C\right).
\)

\subsection{Pairing tensors}

Let $\mathcal M(2m)$ be the set of perfect matchings of $N$.
Draw a matching by chords in a disk with boundary vertices in their
specified cyclic order. Two pairs $(a,b)$ and $(c,d)$ cross if their endpoints alternate.
Write $\mathrm{cr}(\pi)$ for the number of unordered pairs of crossing chords.

\begin{defn}\label{def:pairing_tensors}
For \(\pi\in\mathcal M(2m)\), define
\(
T_\pi\in
\Hom_{\Sp(2n)}\!\left(W^{\otimes 2m},\mathbb C\right)
\)
on pure tensors by
\[
T_\pi(v_1\otimes\cdots\otimes v_{2m})
:=
\prod_{\substack{\{i,j\}\in\pi\\ i<j}}
\omega(v_i,v_j),
\]
and extend linearly to \(W^{\otimes 2m}\).
\end{defn}

\begin{lem} \label{lem:pairing_basis}
We have
\[
Z_{G}
=
\sum_{\pi\in M(2m)}
(-1)^{\operatorname{cr}(\pi)+m}
\left(
\prod_{\{i,j\}\in\pi} L_{ij}
\right)
T_\pi.
\]
Moreover, if $\pi\in M(2m)$ is regarded as a network whose edges
have conductance $1$, possibly nonplanar, then
\(
Z_{\pi}=(-1)^{\operatorname{cr}(\pi)}T_\pi.
\)
\end{lem}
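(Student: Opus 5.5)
We would prove this directly from the matching expansion of the Pfaffian. For a $2m\times 2m$ skew-symmetric matrix $B$,
\[
\operatorname{Pf}(B)=\sum_{\pi\in M(2m)}\sgn(\pi)\prod_{\substack{\{i,j\}\in\pi\\ i<j}}B_{ij},
\]
where $\sgn(\pi)$ is the sign of the permutation $(i_1,j_1,i_2,j_2,\dots,i_m,j_m)$ obtained by listing the pairs with $i_k<j_k$. The standard fact is $\sgn(\pi)=(-1)^{\operatorname{cr}(\pi)}$. We would prove it by induction on the number of crossings: swapping two adjacent endpoints belonging to different chords changes the number of crossings by exactly one and flips the permutation sign. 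The noncrossing matchings all have sign $+1$, which can be reduced to the case $\{1,2\},\{3,4\},\dots$ by successively removing nested adjacent pairs.

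Next we substitute $B_{ij}=-L_{ij}\,\omega(v_i,v_j)$ for $i<j$. Each matching contributes $m$ factors, so the product over the pairs of $\pi$ is
\[
(-1)^m\prod_{\{i,j\}\in\pi}L_{ij}\prod_{\substack{\{i,j\}\in\pi\\ i<j}}\omega(v_i,v_j)=(-1)^m\Bigl(\prod_{\{i,j\}\in\pi}L_{ij}\Bigr)T_\pi(v).
\]
The product of the $L_{ij}$ is unordered because $L$ is symmetric. Multiplying by $\sgn(\pi)=(-1)^{\operatorname{cr}(\pi)}$ gives the stated formula on pure tensors, and multilinearity extends it to all of $W^{\otimes 2m}$.

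For the second claim, we view $\pi$ as a network whose edges join each matched pair, with conductance $1$ and no interior vertices. Then $\Delta_{II}$ is empty and $L=\Delta_{NN}$. This gives $L_{ii}=1$, $L_{ij}=-1$ for $\{i,j\}\in\pi$, and $L_{ij}=0$ for every other off-diagonal entry. The diagonal entries do not matter, since $B_{ii}=0$ by skew-symmetry of $\omega$. In the expansion just proved, only the matching $\pi$ itself has a nonzero product of $L$-entries, and that product is $(-1)^m$. The sign therefore becomes $(-1)^{\operatorname{cr}(\pi)+m}(-1)^m=(-1)^{\operatorname{cr}(\pi)}$, so $Z_\pi=(-1)^{\operatorname{cr}(\pi)}T_\pi$.

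The main obstacle is the sign identity $\sgn(\pi)=(-1)^{\operatorname{cr}(\pi)}$, and in particular fixing the convention carefully. Under the convention in which $B_{12}$ appears with coefficient $+1$ in the Pfaffian, each crossing contributes one transposition relative to a noncrossing arrangement. The induction via adjacent transpositions handles this cleanly.
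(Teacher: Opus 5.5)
Your argument is correct and follows the paper's proof. You expand the Pfaffian over perfect matchings with $\operatorname{sgn}(\pi)=(-1)^{\operatorname{cr}(\pi)}$, which the paper simply cites as the usual convention, and extract $(-1)^m$ from the entries $-L_{ij}\omega(v_i,v_j)$. For $\pi$ viewed as a network, you then observe that only the $\pi$ term survives, with $\prod_{\{i,j\}\in\pi}L_{ij}=(-1)^m$.

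One small correction to your sketch of the sign identity. Phrase it as an invariance argument rather than a strict induction on the number of crossings: $\operatorname{sgn}(\pi)(-1)^{\operatorname{cr}(\pi)}$ is unchanged by every swap of adjacent endpoints of distinct chords, and such swaps connect any matching to $\{1,2\},\{3,4\},\ldots$. A crossing-reducing adjacent swap need not exist. For example, $\{1,7\},\{4,10\},\{2,3\},\{5,6\},\{8,9\}$ has exactly one crossing, but every adjacent swap raises the crossing number to $2$.
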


\begin{proof}
Recall that,
\(
(B_G(v))_{ij}=-L_{ij}\omega(v_i,v_j).
\)
Expanding the Pfaffian over perfect matchings gives
\[
Z_{G}(v)
=
\operatorname{Pf}(B_G(v))
=
\sum_{\pi\in M(2m)}
\operatorname{sgn}(\pi)
\prod_{\{i,j\}\in\pi,\;i<j}
\bigl(-L_{ij}\omega(v_i,v_j)\bigr).
\]
With the usual Pfaffian convention, the sign of a matching is
determined by the parity of the number of crossings of its chords.
Taking into account the minus sign in each entry of $B_G(v)$ gives
\[
Z_{G}(v)
=
\sum_{\pi\in M(2m)}
(-1)^{\operatorname{cr}(\pi)+m}
\left(
\prod_{\{i,j\}\in\pi}L_{ij}
\right)
T_\pi(v).
\]

Now regard $\pi$ itself as a network with conductance $1$ on each
edge.  For $i\neq j$,
\[
L_{ij}=
\begin{cases}
-1, & \{i,j\}\in\pi,\\
0, & \text{otherwise}.
\end{cases}
\]
Hence only the term corresponding to $\pi$ contributes to the Pfaffian expansion.
  Therefore
\(
Z_{\pi}=(-1)^{\operatorname{cr}(\pi)}T_\pi.
\)
\end{proof}

We recall a special case of a theorem of Kenyon and Wilson
\cite{kenyon2008combinatorics}. Our response matrix is the negative of the
one used in \cite{kenyon2008combinatorics}, accounting for the factor
$(-1)^k$.

\begin{thm} \label{thm:grove_expansion}
Let $N=Q\sqcup R\sqcup S$, where
$R=\{r_1<\cdots<r_k\}$ and $S=\{s_1<\cdots<s_k\}$.
Write $L_R^S$ for the submatrix with rows indexed by $R$
and columns indexed by $S$, both in increasing order. Then
\begin{align} \label{eq:KW_formula}
\det L_R^S
=
\frac{(-1)^k}{Z_{\mathrm{unc}}}
\sum_{\rho\in S_k}\operatorname{sgn}(\rho)\,
Z[r_1,s_{\rho(1)}\mid\cdots\mid r_k,s_{\rho(k)}
  \mid q_1\mid\cdots\mid q_\ell].
\end{align}
Here $Q=\{q_1,\ldots,q_\ell\}$, and each $q_i$ is a
singleton boundary block.
\end{thm}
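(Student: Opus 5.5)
The plan is to derive the formula from the Schur complement description of $L$ together with the all-minors matrix-tree theorem, rather than relying on the Kenyon--Wilson paper. Write $\Delta$ in block form with respect to $V=I\sqcup N$. Restrict to rows indexed by $I\cup R$ and columns indexed by $I\cup S$, ordering $I$ first and then $R$ (resp.\ $S$) increasingly. The Schur complement of the invertible block $\Delta_{II}$ in the matrix $\Delta_{I\cup R,\,I\cup S}$ is exactly $\Delta_{RS}-\Delta_{RI}\Delta_{II}^{-1}\Delta_{IS}=L_R^S$. The determinant formula for Schur complements then gives
\[
\det \Delta_{I\cup R,\,I\cup S}=\det\Delta_{II}\cdot \det L_R^S = Z_{\mathrm{unc}}\det L_R^S ,
\]
using $Z_{\mathrm{unc}}=\det\Delta_{II}$ from the previous section. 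So it suffices to show that this minor of $\Delta$ equals $(-1)^k\sum_\rho \operatorname{sgn}(\rho)\,Z[r_1,s_{\rho(1)}\mid\cdots\mid q_\ell]$.

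Next I apply the all-minors matrix-tree theorem (Chaiken) to $\Delta$. The minor $\det\Delta_{I\cup R,\,I\cup S}$ is obtained by deleting the rows $A=Q\cup S$ and the columns $B=Q\cup R$, with $|A|=|B|=\ell+k$. The theorem expresses it as a signed sum over spanning forests $F$ of $G$ with exactly $|A|$ components. Each component must contain exactly one vertex of $A$ and exactly one vertex of $B$, and $F$ is weighted by $w(F)$ times a sign depending on the induced bijection $\beta_F:A\to B$. I then identify these forests combinatorially:
\begin{itemize}
\item Each $q\in Q$ lies in both $A$ and $B$, so its component contains no other node of $Q\cup R\cup S=N$.
\item Every remaining component contains exactly one $s_j$ and one $r_i$.
\end{itemize}
Since every node lies in some component and every component contains a node, these forests are exactly the groves with connectivity $r_1 s_{\rho(1)}\mid\cdots\mid r_k s_{\rho(k)}\mid q_1\mid\cdots\mid q_\ell$ for some $\rho\in S_k$. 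Grouping the forests by $\rho$ produces the partition functions $Z[\cdots]$.

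The remaining step, which I expect to be the main obstacle, is the sign bookkeeping. Chaiken's sign is
\[
(-1)^{\sum_{a\in A}\mathrm{pos}(a)+\sum_{b\in B}\mathrm{pos}(b)}
\]
times the sign of $\beta_F$ read as a permutation between the increasingly ordered sets $A$ and $B$. Both of these depend on how $Q$ interleaves with $R$ and $S$. I would first reduce to a convenient ordering: simultaneously reordering rows and columns of the minor does not change it, so I may order $N$ as $Q,R,S$ when computing. Then $\beta_F$ fixes $Q$ and sends $s_{\rho(i)}\mapsto r_i$, contributing $\operatorname{sgn}(\rho)$. The positional sign $(-1)^{\sum\mathrm{pos}}$ must be shown to be independent of $\rho$ and equal to $(-1)^k$. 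An alternative is to check this directly on the case $k=1$, $\ell=0$, where the minor is a single off-diagonal entry $L_{rs}$, which is $-1$ times the effective connection weight. I would then argue that the sign convention of Chaiken's theorem is uniform in $\rho$ and multiplicative over the $k$ pairs, giving $(-1)^k$.

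Finally, the sign must be matched against the paper's convention that $L$ is the negative of the Kenyon--Wilson response matrix. This accounts for the prefactor $(-1)^k$ and is consistent with the diagonal of $\Delta$ being positive. Dividing by $Z_{\mathrm{unc}}$ then yields \eqref{eq:KW_formula}.
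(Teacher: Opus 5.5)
Your argument is correct, but it differs from the paper, which gives no proof of Theorem~\ref{thm:grove_expansion}. The paper quotes the Kenyon--Wilson formula from \cite{kenyon2008combinatorics} and attributes the factor $(-1)^k$ to its $L$ being the negative of theirs. You instead derive the identity directly from the Schur complement identity $\det\Delta_{I\cup R,\,I\cup S}=\det\Delta_{II}\,\det L_R^S$ and Chaiken's all-minors matrix-tree theorem. Your identification of the contributing forests with groves of type $r_1s_{\rho(1)}\mid\cdots\mid r_ks_{\rho(k)}\mid q_1\mid\cdots\mid q_\ell$ is right. Your route gives a self-contained proof, valid for arbitrary (not necessarily planar) networks, in which the $(-1)^k$ is derived rather than imported through a change of convention; the citation gives brevity. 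The sign step you flag as the main obstacle is immediate with your own reordering. Ordering $V$ as $I,Q,R,S$ leaves the submatrix with rows $I\cup R$ and columns $I\cup S$ literally unchanged, since $R$ and $S$ keep their increasing orders. The deleted rows $Q\cup S$ and deleted columns $Q\cup R$ then have position sum $\equiv\sum_{j=1}^{2k}(|I|+\ell+j)\equiv k \pmod 2$. The bijection $Q\cup S\to Q\cup R$ fixes $Q$ and sends $s_j\mapsto r_{\rho^{-1}(j)}$, so it has sign $\operatorname{sgn}(\rho)$, and the total sign is $(-1)^k\operatorname{sgn}(\rho)$. Your fallback (check $k=1$, then appeal to ``multiplicativity over the $k$ pairs'') would not be a proof by itself, and it is not needed. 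You should also drop the final paragraph about matching the Kenyon--Wilson convention. Your derivation never uses their $L$, and the $(-1)^k$ already comes out of Chaiken's positional sign, so inserting it again for the convention change would double count it.
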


\begin{defn}\label{defn:invariant_assigned}
Let $H$ have vertex set $V(G)$ and edge multiplicities
$\mu_H:E(G)\to\{0,\ldots,n\}$.

An ordered partition
$\mathscr P=(R_1,S_1,\ldots,R_n,S_n)$ of $N$ is
\emph{balanced} if $|R_\ell|=|S_\ell|=:k_\ell$ for every $\ell$.
For such a partition, define
\[
v_{\mathscr P}:=\bigotimes_{i=1}^{2m}v_i,
\qquad
v_i=
\begin{cases}
e_\ell, & i\in R_\ell,\\
f_\ell, & i\in S_\ell.
\end{cases}
\]

An \emph{admissible coloring} of $H$ compatible with
$\mathscr P$ is an ordered tuple of spanning groves
$(F_1,\ldots,F_n)$ of $G$ such that each edge $e$ belongs
to exactly $\mu_H(e)$ of the groves, and each component
of $F_\ell$ contains either:
\begin{itemize}
\item exactly two boundary vertices, one in $R_\ell$
and one in $S_\ell$; or
\item exactly one boundary vertex, belonging to
$N\setminus(R_\ell\cup S_\ell)$.
\end{itemize}
Write $\operatorname{Col}(H;\mathscr P)$ for the set of
these colorings.

Let $\sigma_{\mathscr P}\in S_{2m}$ be the permutation
such that
$(\sigma_{\mathscr P}(1),\ldots,\sigma_{\mathscr P}(2m))$
lists the elements of $R_1,S_1,\ldots,R_n,S_n$ in that
order, increasingly within each set. Define
\[
\epsilon_{\mathscr P}
:=
\operatorname{sgn}(\sigma_{\mathscr P})
(-1)^{\sum_{\ell=1}^n\binom{k_\ell}{2}}.
\]
For $c\in\operatorname{Col}(H;\mathscr P)$, write
$R_\ell=\{a_{\ell,1}<\cdots<a_{\ell,k_\ell}\}$ and
$S_\ell=\{b_{\ell,1}<\cdots<b_{\ell,k_\ell}\}$.
Let $\rho_\ell(c)\in S_{k_\ell}$ be determined by requiring
$a_{\ell,j}$ and $b_{\ell,\rho_\ell(c)(j)}$ to lie in the same
component of $F_\ell$. Set
\[
\operatorname{sgn}_{\mathscr P}(c)
:=
\epsilon_{\mathscr P}
\prod_{\ell=1}^n\operatorname{sgn}(\rho_\ell(c)),
\qquad
T_H(v_{\mathscr P})
:=
\sum_{c\in\operatorname{Col}(H;\mathscr P)}
\operatorname{sgn}_{\mathscr P}(c).
\]
Set $T_H$ equal to zero on unbalanced basis tensors
and extend linearly to obtain
\[
T_H\in\operatorname{Hom}(W^{\otimes 2m},\mathbb C).
\]
\end{defn}

\begin{remark}
Definition~\ref{defn:invariant_assigned} depends only on $H$,
so its vertex and edge sets do not need to agree with those of $G$.
It also applies to nonplanar graphs drawn in the disk with
node set $N$ on the boundary.
\end{remark}

\begin{figure}[ht]
\centering
\begin{tikzpicture}[
    scale=.9,
    every node/.style={font=\small},
    redlayer/.style={red!75!black,line width=1pt},
    bluelayer/.style={
    blue!75!black,
    line width=1pt,
    dash pattern=on 2pt off 1pt
}
]
\foreach \panel/\xx/\yy in {
    0/0/0,
    1/5.1/2.85,
    2/10.2/2.85,
    3/5.1/-2.85,
    4/10.2/-2.85
}{
\begin{scope}[shift={(\xx,\yy)}]
    \draw[black!35,line width=.4pt]
        (0,0) circle ({sqrt(2.32)});

    \coordinate (A) at (-.5,.5);
    \coordinate (B) at (.5,.5);
    \coordinate (C) at (.5,-.5);
    \coordinate (D) at (-.5,-.5);
    \coordinate (O) at (0,0);
    \coordinate (Q) at (-.85,-.85);
    \coordinate (P) at (.95,1.0);
    \coordinate (N3) at (.6,1.4);

    \ifnum\panel=0
        \draw[line width=.85pt]
            (-1.4,.6)--(A)--(-.6,1.4)
            (N3)--(B)--(1.4,.6)
            (1.4,-.6)--(C)--(.6,-1.4)
            (-.6,-1.4)--(Q)--(-1.4,-.6)
            (A)--(B)--(C)--(D)--cycle
            (A)--(O)--(C);

        \draw[line width=.85pt]
            (Q) to[bend left=18] (D)
            (Q) to[bend right=18] (D)
            (N3) to[bend left=18] (P)
            (N3) to[bend right=18] (P);
    \else
        \draw[redlayer]
            (-1.4,.6)--(A)
            (N3)--(B)
            (1.4,-.6)--(C)
            (-.6,-1.4)--(Q);
        \draw[bluelayer]
            (-.6,1.4)--(A)
            (1.4,.6)--(B)
            (.6,-1.4)--(C)
            (-1.4,-.6)--(Q);

        \draw[redlayer]
            (Q) to[bend left=18] (D)
            (N3) to[bend left=18] (P);
        \draw[bluelayer]
            (Q) to[bend right=18] (D)
            (N3) to[bend right=18] (P);

        \ifnum\panel<3
            \draw[redlayer] (A)--(B) (D)--(C);
            \draw[bluelayer] (A)--(D) (B)--(C);
        \else
            \draw[bluelayer] (A)--(B) (D)--(C);
            \draw[redlayer] (A)--(D) (B)--(C);
        \fi

        \ifodd\panel
            \draw[redlayer] (A)--(O);
            \draw[bluelayer] (O)--(C);
        \else
            \draw[bluelayer] (A)--(O);
            \draw[redlayer] (O)--(C);
        \fi
    \fi

    \foreach \p in {A,B,C,D,O,Q,P}{
        \fill (\p) circle (1.8pt);
    }

    \foreach \x/\y in {
        -1.4/.6,-.6/1.4,.6/1.4,1.4/.6,
        1.4/-.6,.6/-1.4,-.6/-1.4,-1.4/-.6
    }{
        \fill (\x,\y) circle (1.8pt);
    }

    \ifnum\panel=0
        \node[left=3pt]  at (-1.4,.6) {$1$};
        \node[above=3pt] at (-.6,1.4) {$2$};
        \node[above=3pt] at (.6,1.4) {$3$};
        \node[right=3pt] at (1.4,.6) {$4$};
        \node[right=3pt] at (1.4,-.6) {$5$};
        \node[below=3pt] at (.6,-1.4) {$6$};
        \node[below=3pt] at (-.6,-1.4) {$7$};
        \node[left=3pt]  at (-1.4,-.6) {$8$};
        \node at (0,-2.65) {The superposition $H$};
    \else
        \node[left=3pt]  at (-1.4,.6) {$e_1$};
        \node[above=3pt] at (-.6,1.4) {$e_2$};
        \node[above=3pt] at (.6,1.4) {$f_1$};
        \node[right=3pt] at (1.4,.6) {$f_2$};
        \node[right=3pt] at (1.4,-.6) {$e_1$};
        \node[below=3pt] at (.6,-1.4) {$e_2$};
        \node[below=3pt] at (-.6,-1.4) {$f_1$};
        \node[left=3pt]  at (-1.4,-.6) {$f_2$};
        \node at (0,-2.65) {Coloring $c_{\panel}$};
    \fi
\end{scope}
}
\end{tikzpicture}
\caption{A double-grove superposition $H$ and its four
compatible colorings for
$\mathscr P=(\{1,5\},\{3,7\},\{2,6\},\{4,8\})$.
Solid red and dashed blue edges denote the first and
second grove layers, respectively. Curved edges represent doubled edges. Each doubled edge
contributes one edge to each layer.
Both layers span all vertices. A singleton boundary
block may be empty or contain interior vertices, as illustrated by the component at node $3$.}
\label{fig:two-colorings}
\end{figure}

\begin{example}
Suppose $n=2$. For the graph $H$ in
Figure~\ref{fig:two-colorings}, let
\[
v=e_1\otimes e_2\otimes f_1\otimes f_2
  \otimes e_1\otimes e_2\otimes f_1\otimes f_2.
\]
Each doubled edge must contribute one edge to each grove
layer. Contracting these doubled edges leaves the square
with its diagonal and the eight boundary edges. The colors of the boundary edges
are determined by $v$.

The two halves of the diagonal must have opposite
colors. If both were red or blue, then the vertex in the middle of the square would not be in a component of the opposite color.
The square must alternate colors, since
two adjacent square edges of the same color would
connect three boundary nodes in that layer.
Thus there are two choices for the square and two
for the diagonal, giving exactly the four compatible
colorings shown.

The boundary partitions of the two layers are
\[
\begin{array}{c|c|c}
 & \text{Red layer} & \text{Blue layer} \\ \hline
c_1,c_2 & 13|57|2|4|6|8 & 28|46|1|3|5|7 \\
c_3,c_4 & 17|35|2|4|6|8 & 24|68|1|3|5|7
\end{array}
\]

Here $\epsilon_{\mathscr P}=1$. Each coloring has sign
$-1$ since for each coloring one layer gives the
identity pairing between its ordered sets $R_i$ and
$S_i$, while the other gives a transposition.
Therefore
$T_H(v)=-4.$
\end{example}

The following evaluation formula relates the coloring
definition to the network tensor.

\begin{lem}\label{lem:sp4_test_evaluation}
For every ordered balanced partition
$\mathscr P=(R_1,S_1,\ldots,R_n,S_n)$ of $N$,
\begin{align} \label{eq:L_expansion}
Z_G(v_{\mathscr P})
=
(-1)^m\epsilon_{\mathscr P}
\prod_{\ell=1}^n \det L_{R_\ell}^{S_\ell},
\end{align}
where $L_{R_\ell}^{S_\ell}$ has rows indexed by $R_\ell$ and
columns indexed by $S_\ell$, both in increasing order.
\end{lem}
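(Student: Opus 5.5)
We compute $\operatorname{Pf}(B_G(v_{\mathscr P}))$ directly. The idea is to reorder rows and columns so that the matrix becomes block anti-diagonal, and then evaluate the Pfaffian of each block.

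\emph{Reordering.} Let $\sigma=\sigma_{\mathscr P}$. By the transformation rule $\operatorname{Pf}(PAP^T)=\det(P)\operatorname{Pf}(A)$, we have
\[
\operatorname{Pf}(B)=\operatorname{sgn}(\sigma)\,\operatorname{Pf}(B'),\qquad B'_{ab}=B_{\sigma(a)\sigma(b)}.
\]
In the new order the indices run through $R_1,S_1,\ldots,R_n,S_n$. The symplectic pairing of basis vectors satisfies $\omega(v_i,v_j)\neq 0$ only when one of $i,j$ lies in $R_\ell$ and the other in $S_\ell$ for the same $\ell$. There $\omega(e_\ell,f_\ell)=1$ and $\omega(f_\ell,e_\ell)=-1$.

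\emph{Block structure.} Consequently $B'$ is block diagonal, with one $2k_\ell\times 2k_\ell$ block $C_\ell$ for each $\ell$. Each block has the form
\[
C_\ell=\begin{pmatrix}0 & -M_\ell\\ M_\ell^T & 0\end{pmatrix},\qquad M_\ell=L_{R_\ell}^{S_\ell},
\]
where $M_\ell^T$ appears because $L$ is symmetric and $\omega(f,e)=-1$. The Pfaffian is multiplicative on block diagonal matrices, so
\[
\operatorname{Pf}(B')=\prod_\ell \operatorname{Pf}(C_\ell).
\]

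\emph{Pfaffian of a single block.} For a $k\times k$ matrix $X$, the standard identity is
\[
\operatorname{Pf}\begin{pmatrix}0&X\\-X^T&0\end{pmatrix}=(-1)^{\binom k2}\det X.
\]
I would verify this by expanding the Pfaffian over matchings. Only matchings pairing the first $k$ indices with the last $k$ indices contribute. The matching $\{a,k+\rho(a)\}$ has sign $(-1)^{\binom k2}\operatorname{sgn}\rho$; this is the base case $\rho=\mathrm{id}$, with $\binom k2$ crossings, together with the observation that composing with a transposition flips the sign.

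Applying the identity with $X=-M_\ell$ gives
\[
\operatorname{Pf}(C_\ell)=(-1)^{\binom{k_\ell}{2}}(-1)^{k_\ell}\det M_\ell .
\]

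\emph{Collecting signs.} Multiplying over $\ell$, the factors $(-1)^{k_\ell}$ combine to $(-1)^{\sum_\ell k_\ell}=(-1)^m$, since $2\sum_\ell k_\ell=2m$. The remaining factors $(-1)^{\binom{k_\ell}{2}}$ together with $\operatorname{sgn}(\sigma)$ form $\epsilon_{\mathscr P}$. This yields \eqref{eq:L_expansion}.

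The main obstacle is sign bookkeeping. Three points need care: the convention for $\operatorname{Pf}$ under permutation, the sign $(-1)^{\binom k2}$ of the anti-diagonal block, and the choice of $-M_\ell$ versus $M_\ell^T$ in the upper-right corner. I would check all of them against the case $m=1$, $n=1$, $\mathscr P=(\{1\},\{2\})$. There $B_{12}=-L_{12}$, so $\operatorname{Pf}=-L_{12}$, which agrees with $(-1)^1\cdot 1\cdot L_{12}$.
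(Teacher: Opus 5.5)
Your proposal is correct and follows essentially the same route as the paper. Both reorder by $\sigma_{\mathscr P}$ to obtain a block-diagonal matrix with blocks $\begin{pmatrix}0&-L_{R_\ell}^{S_\ell}\\ (L_{R_\ell}^{S_\ell})^{\mathsf T}&0\end{pmatrix}$, evaluate each block Pfaffian as $(-1)^{\binom{k_\ell}{2}+k_\ell}\det L_{R_\ell}^{S_\ell}$, and collect signs, with the only difference being that you also sketch why the block-Pfaffian identity holds, which the paper simply quotes.
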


\begin{proof}
Write $k_\ell=|R_\ell|=|S_\ell|$, so that $\sum_{\ell=1}^n k_\ell=m$.
Since
\[
(B_G(v_{\mathscr P}))_{ij}=-L_{ij}\omega(v_i,v_j),
\]
reordering the rows and columns by $\sigma_{\mathscr P}$
gives the block diagonal matrix
\[
\bigoplus_{\ell=1}^n
\begin{pmatrix}
0 & -L_{R_\ell}^{S_\ell}\\
(L_{R_\ell}^{S_\ell})^{\mathsf T} & 0
\end{pmatrix}.
\]
Using the identity
\[
\operatorname{Pf}
\begin{pmatrix}
0 & -A\\
A^{\mathsf T} & 0
\end{pmatrix}
=
(-1)^{\binom k2+k}\det A
\qquad
(A\in\operatorname{Mat}_{k\times k}),
\]
we obtain
\[
\begin{aligned}
Z_G(v_{\mathscr P})
&=
\operatorname{sgn}(\sigma_{\mathscr P})
(-1)^{\sum_{\ell=1}^n(\binom{k_\ell}{2}+k_\ell)}
\prod_{\ell=1}^n \det L_{R_\ell}^{S_\ell}\\
&=
(-1)^m\epsilon_{\mathscr P}
\prod_{\ell=1}^n \det L_{R_\ell}^{S_\ell}.
\end{aligned}
\]
Here empty blocks contribute $1$.
\end{proof}

\begin{lem} \label{lem:grove_sp4}
For every $H$,
\(
T_H\in\operatorname{Hom}_{\mathrm{Sp}(2n)}
(W^{\otimes 2m},\mathbb C).
\)
\end{lem}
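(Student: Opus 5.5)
The plan is to realize $T_H$ as a coefficient of an $\Sp(2n)$-invariant polynomial. Concretely, I will take $Z_G$ for a network built on $H$ with \emph{formal} conductances and extract one monomial. Let $\Gamma$ be the underlying simple graph of $H$: same vertices and node set $N$, with one edge $e$ for each edge of positive multiplicity. Give $e$ an indeterminate conductance $x_e$. If some component of $\Gamma$ contains no node, then no grove exists, so $\operatorname{Col}(H;\mathscr P)=\emptyset$ for every $\mathscr P$ and $T_H=0$, which is trivially invariant. So I may assume every component of $\Gamma$ meets $N$. Then $\Delta_{II}$ is invertible for positive $x$, and $Z_{\mathrm{unc}}(x)=\det\Delta_{II}$ is a polynomial in $x$. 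I then consider
\[
\Phi_x:=Z_{\mathrm{unc}}(x)^n\,Z_\Gamma\in\operatorname{Hom}(W^{\otimes 2m},\mathbb C).
\]
For each positive $x$ this is a scalar multiple of $Z_\Gamma$. Hence it is $\Sp(2n)$-invariant by the computation preceding the subsection on pairing tensors.

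The key step is to evaluate $\Phi_x$ on the basis tensors $v_1\otimes\cdots\otimes v_{2m}$ with each $v_i\in\{e_\ell,f_\ell\}$. First, suppose such a tensor is unbalanced, meaning $|R_\ell|\neq|S_\ell|$ for some $\ell$. Reordering $B_\Gamma(v)$ as in the proof of Lemma~\ref{lem:sp4_test_evaluation} gives a block-diagonal matrix. The offending block $\begin{pmatrix}0&-A\\A^{\mathsf T}&0\end{pmatrix}$ has a nonsquare $A$, so it is singular and its Pfaffian vanishes. Thus $\Phi_x(v)=0$, which agrees with the convention $T_H=0$ on unbalanced tensors. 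Next, suppose the tensor is balanced, say $v=v_{\mathscr P}$. Then Lemma~\ref{lem:sp4_test_evaluation} and Theorem~\ref{thm:grove_expansion}, applied once for each $\ell$ with $Q=N\setminus(R_\ell\cup S_\ell)$, give
\[
\Phi_x(v_{\mathscr P})=(-1)^m\epsilon_{\mathscr P}\prod_{\ell=1}^n\Bigl(Z_{\mathrm{unc}}\det L_{R_\ell}^{S_\ell}\Bigr)
=\epsilon_{\mathscr P}\sum_{(F_1,\ldots,F_n)}\prod_{\ell}\operatorname{sgn}(\rho_\ell)\prod_{\ell}w(F_\ell).
\]
The sign is $(-1)^{m}\cdot(-1)^{\sum k_\ell}=1$. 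Here each $F_\ell$ ranges over groves of $\Gamma$ whose components are exactly as in Definition~\ref{defn:invariant_assigned}: either an $R_\ell$–$S_\ell$ pair or a single node outside $R_\ell\cup S_\ell$. The weight $\prod_\ell w(F_\ell)$ equals the monomial $\prod_e x_e^{\#\{\ell:\,e\in F_\ell\}}$. Its coefficient at $x^{\mu_H}=\prod_e x_e^{\mu_H(e)}$ is therefore exactly the signed count $\sum_{c\in\operatorname{Col}(H;\mathscr P)}\operatorname{sgn}_{\mathscr P}(c)=T_H(v_{\mathscr P})$.

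To conclude, I will show that $[x^{\mu_H}]\Phi_x=T_H$ on every basis tensor. This is because both sides are linear and they agree on the basis $\{e_\ell,f_\ell\}^{\otimes 2m}$. For fixed $g\in\Sp(2n)$, the map $x\mapsto\Phi_x\circ g^{\otimes 2m}-\Phi_x$ is a polynomial in $x$ with values in a finite-dimensional space. It vanishes on the positive orthant, so it is identically zero, and hence each of its coefficients is zero. Taking the coefficient of $x^{\mu_H}$ then gives $T_H\circ g^{\otimes 2m}=T_H$.

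The main obstacle is careful bookkeeping rather than any single hard step. I must make sure that Theorem~\ref{thm:grove_expansion} indeed clears to a \emph{polynomial} identity after multiplying by $Z_{\mathrm{unc}}^n$. I must also check that the Kenyon–Wilson signs combine with $\epsilon_{\mathscr P}$ exactly into $\operatorname{sgn}_{\mathscr P}(c)$, and that the monomial extraction counts ordered tuples $(F_1,\ldots,F_n)$ with the right edge multiplicities, as in the definition of admissible colorings.
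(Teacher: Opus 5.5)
Your proposal is correct and is essentially the paper's proof. Both introduce formal conductances, multiply the network tensor by $Z_{\mathrm{unc}}^n$, use Lemma~\ref{lem:sp4_test_evaluation} with Theorem~\ref{thm:grove_expansion} to identify the coefficient of $\prod_e x_e^{\mu_H(e)}$ with $T_H$ on basis tensors, and then extract that coefficient from the $\Sp(2n)$-invariance identity. Your extra steps fill in details the paper leaves implicit without changing the argument: working on the support $\Gamma$ of $H$ and disposing of the nodeless-component case so that $\Delta_{II}$ is invertible, proving vanishing on unbalanced tensors, and passing from invariance at positive conductances to a formal polynomial identity.
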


\begin{proof}
Assign a variable conductance $x_e$ to each edge
$e\in E(G)$, and define
\[
P_G(\mathbf x;v)
:=
Z_{\mathrm{unc},G}(\mathbf x)^n
Z_G(\mathbf x;v).
\]
For a balanced basis tensor $v=v_{\mathscr P}$,
Lemma~\ref{lem:sp4_test_evaluation} and the grove
Theorem \ref{thm:grove_expansion} give
\[
P_G(\mathbf x;v_{\mathscr P})
=
\sum_{(F_1,\ldots,F_n)}
\epsilon_{\mathscr P}
\prod_{\ell=1}^n
\left(
\operatorname{sgn}(\rho_\ell)
\prod_{e\in F_\ell}x_e
\right),
\]
where the sum is over ordered $n$-tuples of groves of $G$
compatible with $\mathscr P$. The signs from \eqref{eq:KW_formula}
formulas multiply to $(-1)^{\sum_\ell k_\ell}=(-1)^m$,
canceling the factor $(-1)^m$ in
Lemma~\ref{lem:sp4_test_evaluation}.

The coefficient of
$\prod_{e\in E(G)}x_e^{\mu_H(e)}$ selects precisely those
tuples in which each edge $e$ belongs to exactly
$\mu_H(e)$ layers. These are the admissible colorings
of $H$. Therefore,
\[
T_H(v_{\mathscr P})
=
\left[\prod_{e\in E(G)}x_e^{\mu_H(e)}\right]
P_G(\mathbf x;v_{\mathscr P}).
\]
Both sides vanish on unbalanced basis tensors.
Thus, by linearity, $P_G(\mathbf x;v)$ is polynomial
in $\mathbf x$ and
\[
T_H(v)
=
\left[\prod_{e\in E(G)}x_e^{\mu_H(e)}\right]
P_G(\mathbf x;v)
\]
for every $v\in W^{\otimes 2m}$.

For $g\in\Sp(2n)$, the Pfaffian definition gives
\[
P_G(\mathbf x;g^{\otimes 2m}v)
=
P_G(\mathbf x;v).
\]
Taking the coefficient of
$\prod_{e\in E(G)}x_e^{\mu_H(e)}$ on both sides yields
\(
T_H(g^{\otimes 2m}v)=T_H(v),
\)
as required.
\end{proof}

\section{$\SL(2,\mathbb C)$ case}

Let $\mathrm{NC}(2m)$ denote the set of non crossing matchings of $N$ that can be drawn by connecting paired vertices in the disk without crossings. For $\pi\in\mathrm{NC}(2m)$, regard $\pi$ as a planar network
with unit conductances and no interior vertices. By Lemma~\ref{lem:pairing_basis},
\(
Z_\pi=T_\pi.
\)
Thus $T_\pi$ is the usual Temperley--Lieb invariant associated
with $\pi$.

\begin{prop}\label{prop:sl2_grove_expansion}
Suppose $n=1$. For
$\pi\in\mathrm{NC}(2m)$, let $Z[\pi]$ denote the weighted sum of groves
whose boundary connectivity is $\pi$. Then
\[
Z_{G}
=
\frac{1}{Z_{\mathrm{unc}}}
\sum_{\pi\in\mathrm{NC}(2m)}
Z[\pi]\,T_\pi.
\]
Equivalently, for
$v=v_1\otimes\cdots\otimes v_{2m}\in W^{\otimes 2m}$,
\[
Z_{G}(v)
=
\frac{1}{Z_{\mathrm{unc}}}
\sum_{\pi\in\mathrm{NC}(2m)}
Z[\pi]\,
\Pf(B_\pi(v)).
\]
\end{prop}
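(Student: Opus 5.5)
Both sides lie in $\Hom(W^{\otimes 2m},\mathbb C)$ with $W=\mathbb C^2$, so I will compare them on the basis tensors $v_{\mathscr P}$. When $n=1$ a balanced partition is $\mathscr P=(R,S)$ with $|R|=|S|=m$ and $R\sqcup S=N$. On an unbalanced basis tensor both sides vanish. The left side vanishes because a Pfaffian term needs each $\omega(v_i,v_j)\neq 0$, which forces pairing $e$'s with $f$'s. Each $T_\pi$ vanishes for the same reason. So it suffices to check equality on each $v_{\mathscr P}$.

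For the left side, Lemma~\ref{lem:sp4_test_evaluation} gives $Z_G(v_{\mathscr P})=(-1)^m\epsilon_{\mathscr P}\det L_R^S$. Theorem~\ref{thm:grove_expansion} with $Q=\emptyset$ and $k=m$ then gives
\[
Z_G(v_{\mathscr P})=\frac{\epsilon_{\mathscr P}}{Z_{\mathrm{unc}}}\sum_{\rho\in S_m}\operatorname{sgn}(\rho)\,Z[r_1 s_{\rho(1)}\mid\cdots\mid r_m s_{\rho(m)}].
\]
Only planar groves occur, so the only nonzero terms are those where the matching $\pi_\rho=\{\{r_j,s_{\rho(j)}\}\}$ is noncrossing. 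The left side is therefore $Z_{\mathrm{unc}}^{-1}\sum_{\pi}\epsilon_{\mathscr P}\operatorname{sgn}(\rho_\pi)Z[\pi]$. The sum runs over noncrossing matchings $\pi$ all of whose blocks join $R$ to $S$, and $\rho_\pi$ is the induced bijection.

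For the right side, Lemma~\ref{lem:pairing_basis} applied to the network $\pi$ gives $T_\pi=Z_\pi$, since $\operatorname{cr}(\pi)=0$. The response matrix of $\pi$ has $L_{ij}=-1$ on the pairs of $\pi$. Applying Lemma~\ref{lem:sp4_test_evaluation} to $\pi$ gives $T_\pi(v_{\mathscr P})=(-1)^m\epsilon_{\mathscr P}\det L_R^S(\pi)$. The matrix $L_R^S(\pi)$ is $-1$ times a permutation matrix when every block of $\pi$ joins $R$ to $S$, and it is singular otherwise. Hence $T_\pi(v_{\mathscr P})=\epsilon_{\mathscr P}\operatorname{sgn}(\rho_\pi)$ when $\pi$ is compatible with $\mathscr P$, and $0$ otherwise.

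A cleaner route to the same value is to apply the first half of the argument to the network $\pi$ itself. That network has $Z_{\mathrm{unc}}=1$, and its only grove of type $\pi$ has weight $1$.

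The two expressions now agree term by term, which proves the first identity. The second identity follows from $T_\pi=Z_\pi=\Pf(B_\pi(v))$.

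The main point needing care is the sign bookkeeping. One must check that the $(-1)^m$ of Lemma~\ref{lem:sp4_test_evaluation} cancels the $(-1)^k$ of \eqref{eq:KW_formula}. One must also check that the permutation-matrix determinant has the sign $\operatorname{sgn}(\rho_\pi)$, using the same increasing-order convention. Applying the same lemma to both $G$ and $\pi$ makes these conventions match automatically.
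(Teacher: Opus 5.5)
Your proposal is correct and follows essentially the same route as the paper. You reduce to balanced basis tensors and evaluate $Z_G(v_{\mathscr P})$ by the block Pfaffian combined with Theorem~\ref{thm:grove_expansion}; you cite Lemma~\ref{lem:sp4_test_evaluation}, where the paper redoes that computation inline for $n=1$. You then identify the sign $\epsilon_{\mathscr P}\operatorname{sgn}(\rho_\pi)$ with $T_\pi(v_{\mathscr P})$ by applying the same computation to the unit-conductance network $\pi$. Your direct permutation-matrix evaluation of $\det L_R^S(\pi)$ is a harmless variant of that last step.
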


\begin{proof}
It suffices to
evaluate both sides on tensors whose factors are the symplectic basis elements
$\{e_1,f_1\}$.

Fix such a tensor $v$, and write
\(
N=R\sqcup S,
\)
where the vertices in $R$ are assigned $e_1$ and those in $S$ are
assigned $f_1$. If $|R|\neq |S|$, then both sides vanish. We therefore
assume that $|R|=|S|=m$.

The only nonzero pairings among the boundary vectors are
$\omega(e_1,f_1)=1$ and $\omega(f_1,e_1)=-1.$
Let $\sigma_{(R,S)}$ denote the sign of the permutation that changes the
boundary order $1,\ldots,2m$ to
\(
r_1,\ldots,r_m,s_1,\ldots,s_m.
\)
In this ordering, the matrix $B_G(v)$ has the block form
\[
\begin{pmatrix}
0 & -L_R^S\\
(L_R^S)^{\mathsf T} & 0
\end{pmatrix}.
\]
Hence
\(
Z_{G}(v)
=\Pf(B_G(v))
=\sigma_{(R,S)}(-1)^{\binom m2+m}\det L_R^S.
\) The grove Theorem \ref{thm:grove_expansion} gives that
\[
Z_{G}(v)
=
\frac{\sigma_{(R,S)}(-1)^{{m\choose 2}}}{Z_{\mathrm{unc}}}
\sum_{\pi}
\sgn(\rho_\pi)Z[\pi],
\]
where the sum ranges over noncrossing pairings $\pi$ in which every
pair contains one vertex of $R$ and one vertex of $S$, and
$\sgn(\rho_\pi)$ is the sign of the permutation appearing in the RHS of \eqref{eq:KW_formula}.

It remains only to identify the sign with $T_\pi(v)$. For this, regard $\pi$ itself
as a planar network, with each edge having conductance one. Its
boundary response tensor is precisely $T_\pi$. Moreover, there is a
unique grove on this network having boundary connectivity $\pi$, and
its weight is one. Applying the same Theorem \ref{thm:grove_expansion} to the
network $\pi$ with edge conductances $c_e=1\, \forall e\in \pi$ gives
\[
T_\pi(v)=\sigma_{(R,S)}(-1)^{{m\choose 2}}\sgn(\rho_\pi).
\]
Since there is only one valid grove on the graph $\pi$ and $Z_{\mathrm{unc}}=1$, we obtain
\[
Z_{G}(v)
=
\frac{1}{Z_{\mathrm{unc}}}
\sum_{\pi\in\mathrm{NC}(2m)}Z[\pi]T_\pi(v).
\]
Since the tensors with factors in $\{e_1,f_1\}$ form a basis of
$W^{\otimes 2m}$, the claimed identity follows.
\end{proof}

\section{The case $\Sp(4,\mathbb C)$}
\label{sec:sec_4}

We now specialize to $n=2$, so that $W=\mathbb C^4$.
Our goal is to express $Z_G$ in a basis of
$\Sp(4)$-invariants, with coefficients given by partition
functions of reduced double groves.

For an ordered balanced partition
$\mathscr P=(R_1,S_1,R_2,S_2)$ of $N$, call an ordered pair
$(F_1,F_2)$ of groves compatible with $\mathscr P$ a
\emph{double grove}. Thus each component of $F_\ell$ contains
either two boundary vertices, one in $R_\ell$ and one in $S_\ell$,
or a single boundary vertex in $N\setminus(R_\ell\cup S_\ell)$.
The superposition $H=F_1\cup F_2$ records each edge with
its multiplicity in the two layers.

By Lemma~\ref{lem:sp4_test_evaluation} and
Theorem~\ref{thm:grove_expansion},
\[
Z_{\mathrm{unc}}^2Z_G(v_{\mathscr P})
=
\sum_{(F_1,F_2)}
\epsilon_{\mathscr P}
\operatorname{sgn}(\rho_1)\operatorname{sgn}(\rho_2)
\prod_{e\in F_1}c_e\prod_{e\in F_2}c_e,
\]
where the sum is over double groves compatible with
$\mathscr P$. We will reduce their superpositions to
$3$-noncrossing matching graphs and show that the
corresponding coloring tensors form a basis.

\begin{defn}
A \emph{$3$-noncrossing matching graph} on $2m$ boundary vertices is
obtained from a chord diagram having no three mutually crossing chords
by replacing every crossing of two chords by a $4$-valent vertex.
The resulting graph has $1$-valent boundary vertices and $4$-valent
interior vertices.
\end{defn}

For background on $k$-noncrossing matchings and their
enumeration, see \cite{chen2007crossings}. Given a double grove $(F_1,F_2)$, we apply a slight modification of the double grove
reduction of \cite{gao2025electrical} to its superposition
\(H=F_1\cup F_2,
\)
with edges counted with multiplicity. Because the boundary blocks in
each grove are either pairs or singletons, terms in the reduction algorithm that identify
boundary vertices do not contribute. We therefore use the following
boundary-adapted form of the reduction rules.

Let $\mathrm{TC}(2m)$ denote the set of $3$-noncrossing matching graphs
on the boundary node set $N=\{1,\ldots,2m\}$, and let
$\mathbb Z\mathrm{TC}(2m)$ denote the free abelian group with basis
$\mathrm{TC}(2m)$.

Following \cite{gao2025electrical}, for a double grove $H$ we define its
grove reduction
\(
\alpha(H)\in\mathbb Z\mathrm{TC}(2m)
\)
recursively by the following rules. For each $H$, choose a complete reduction sequence and let
$\alpha(H)$ denote its output. Note that in \cite{gao2025electrical}, the \(3\)-noncrossing matching is associated to the medial pairing rather than directly on the primal graph.

Independence of this choice
will follow from the basis Corollary \ref{cor:sp4_basis}.

\begin{defn}(cf.~\cite[Definition~4.2]{gao2025electrical})\label{defn:groves_moves}
For a double-grove superposition $H$, with edges counted with multiplicity,
define $\alpha(H)$ recursively as follows:

\begin{enumerate}
\item[(0)] If some edge of \(H\) has multiplicity \(\ge 3\), then
\(
\alpha(H)=0.
\)

\item[(1)] If $H$ is a $3$-noncrossing matching graph $\xi$, then $\alpha(H)=\xi$.

\item[(2)] If \(H\) has an isolated boundary vertex or if \(H\) has loops, then
\(
\alpha(H)=0.
\)

\item[(3)] If $H$ has a double edge $e$, then
\[
\alpha(H)=
\begin{cases}
\alpha(H'), & \text{if the endpoints of $e$ are not both boundary vertices},\\
0, & \text{if both endpoints of $e$ are boundary vertices},
\end{cases}
\]
where $H'$ is obtained from $H$ by contracting $e$.

\item[(4)] Suppose that $H$ has edges $e_1,\ldots,e_r$ of multiplicity $1$
between the same pair of vertices with $r\geq 2$. If both vertices are boundary vertices, then
\(
\alpha(H)=0.
\)
Otherwise,
\(
\alpha(H)=0 \text{ if } r\ge 3,
\)
while, if $r=2$,
\(
\alpha(H)=2\,\alpha(H'),
\)
where $H'$ is obtained from $H$ by contracting $e_1$ and removing $e_2$.

\item[(5)] If \(H\) has an interior leaf or an interior vertex with
degree \(0\), then
\(
\alpha(H)=0.
\)

\item[(6)] If \(H\) has an interior vertex \(v\) of degree \(2\) with
distinct neighbors, then
\(
\alpha(H)=2\,\alpha(H'),
\)
where \(H'\) is obtained from \(H\) by removing \(v\) and the two edges.

\item[(7)] If \(H\) has an interior vertex \(v\) of degree \(3\) with
distinct neighbors \(v_1,v_2,v_3\), then
\(
\alpha(H)=\alpha(H_1)+\alpha(H_2)+\alpha(H_3),
\)
where \(H_i\) is obtained from \(H\) by removing \(v\) together with the
edges incident to it and adding the edge \((v_{i+1},v_{i+2})\), where
the indices are taken modulo \(3\).

\item[(8)] Suppose that $H$ has a triangular face with vertices
$v_1,v_2,v_3$. For $i\in\{1,2,3\}$, let $H_i$ be obtained from $H$
by identifying $v_i$ with $v_{i+1}$ and replacing the three edges of
the triangle by a single edge between $v_i=v_{i+1}$ and $v_{i+2}$,
with indices taken modulo $3$.

If at most one of $v_1,v_2,v_3$ is a boundary vertex, then
\[
\alpha(H)=\alpha(H_1)+\alpha(H_2)+\alpha(H_3).
\]

If exactly two of $v_1,v_2,v_3$ are boundary vertices, say $v_1$ and
$v_2$, then
\[
\alpha(H)=\alpha(H_2)+\alpha(H_3),
\]
so that the term $H_1$, which identifies the two boundary vertices,
is omitted. (See Figure \ref{fig:boundary_triangle_colored})

If all three of $v_1,v_2,v_3$ are boundary vertices, then
\(
\alpha(H)=0.
\)

\end{enumerate}
See figure \ref{fig:grove-reduction-moves}.
\end{defn}

Every nonterminal reduction either gives zero or replaces $H$
with double groves that have fewer vertices or edges. Hence, every reduction sequence
terminates. We say a double grove is \emph{reduced} if there are no further reductions that can be applied to it.

Proposition \ref{prop:reduction_3_matching} below shows that every nonzero
reduced double grove is a 
$3$-noncrossing matching graph.

\begin{figure}[ht]
\centering
\begin{tikzpicture}[
    scale=0.92,
    line cap=round,
    line join=round,
    vertex/.style={circle,fill=black,inner sep=1.4pt},
    edge/.style={line width=0.7pt},
    every node/.style={font=\small}
]


\begin{scope}[shift={(-0.4,0)}]

\draw (0,0) circle (0.82);
\node[vertex] (a) at (-0.32,0) {};
\node[vertex] (b) at ( 0.32,0) {};

\draw[edge] (a) to[bend left=18] (b);
\draw[edge] (a) to[bend right=18] (b);

\draw[edge] (a)--(-0.98, 0.20);
\draw[edge] (a)--(-0.98,-0.20);
\draw[edge] (b)--( 0.98, 0.20);
\draw[edge] (b)--( 0.98,-0.20);

\node at (1.70,0) {$\longmapsto 2$};

\begin{scope}[shift={(3.70,0)}]
\draw (0,0) circle (0.82);
\node[vertex] (c) at (0,0) {};

\draw[edge] (c)--(-0.98, 0.20);
\draw[edge] (c)--(-0.98,-0.20);
\draw[edge] (c)--( 0.98, 0.20);
\draw[edge] (c)--( 0.98,-0.20);
\end{scope}

\node at (1.85,-1.18) {Move (4)};
\end{scope}


\begin{scope}[shift={(6.6,0)}]

\draw (0,0) circle (0.82);
\node[vertex] (a) at (-0.42,0) {};
\node[vertex] (v) at ( 0,0) {};
\node[vertex] (b) at ( 0.42,0) {};

\draw[edge] (a)--(v)--(b);

\draw[edge] (a)--(-0.98, 0.20);
\draw[edge] (a)--(-0.98,-0.20);
\draw[edge] (b)--( 0.98, 0.20);
\draw[edge] (b)--( 0.98,-0.20);

\node at (1.70,0) {$\longmapsto 2$};

\begin{scope}[shift={(3.70,0)}]
\draw (0,0) circle (0.82);
\node[vertex] (a2) at (-0.42,0) {};
\node[vertex] (b2) at ( 0.42,0) {};


\draw[edge] (a2)--(-0.98, 0.20);
\draw[edge] (a2)--(-0.98,-0.20);
\draw[edge] (b2)--( 0.98, 0.20);
\draw[edge] (b2)--( 0.98,-0.20);
\end{scope}

\node at (1.85,-1.18) {Move (6)};
\end{scope}


\begin{scope}[shift={(0,-3.25)}]

\draw (0,0) circle (0.92);

\node[vertex] (v)  at (0,0) {};
\node[vertex] (v1) at (0,0.48) {};
\node[vertex] (v2) at (-0.48,-0.30) {};
\node[vertex] (v3) at (0.48,-0.30) {};

\draw[edge] (v)--(v1);
\draw[edge] (v)--(v2);
\draw[edge] (v)--(v3);

\draw[edge] (v1)--(-0.10,1.10);
\draw[edge] (v1)--( 0.10,1.10);

\draw[edge] (v2)--(-1.00,-0.38);
\draw[edge] (v2)--(-0.88,-0.62);

\draw[edge] (v3)--( 1.00,-0.38);
\draw[edge] (v3)--( 0.88,-0.62);

\node[anchor=west] at (0.07,0.60) {$v_1$};
\node[anchor=east] at (-0.35,-0.1) {$v_2$};
\node[anchor=west] at (0.35,-0.1) {$v_3$};

\node at (1.65,0) {$\longmapsto$};

\begin{scope}[shift={(3.35,0)}]
\draw (0,0) circle (0.92);

\node[vertex] (a1) at (0,0.48) {};
\node[vertex] (a2) at (-0.48,-0.30) {};
\node[vertex] (a3) at (0.48,-0.30) {};

\draw[edge] (a2)--(a3);

\draw[edge] (a1)--(-0.10,1.10);
\draw[edge] (a1)--( 0.10,1.10);

\draw[edge] (a2)--(-1.00,-0.38);
\draw[edge] (a2)--(-0.88,-0.62);

\draw[edge] (a3)--( 1.00,-0.38);
\draw[edge] (a3)--( 0.88,-0.62);

\node[anchor=west] at (0.07,0.60) {$v_1$};
\node[anchor=east] at (-0.35,-0.1) {$v_2$};
\node[anchor=west] at (0.35,-0.1) {$v_3$};
\end{scope}

\node at (5.20,0) {$+$};

\begin{scope}[shift={(6.40,0)}]
\draw (0,0) circle (0.92);

\node[vertex] (b1) at (0,0.48) {};
\node[vertex] (b2) at (-0.48,-0.30) {};
\node[vertex] (b3) at (0.48,-0.30) {};

\draw[edge] (b1)--(b3);

\draw[edge] (b1)--(-0.10,1.10);
\draw[edge] (b1)--( 0.10,1.10);

\draw[edge] (b2)--(-1.00,-0.38);
\draw[edge] (b2)--(-0.88,-0.62);

\draw[edge] (b3)--( 1.00,-0.38);
\draw[edge] (b3)--( 0.88,-0.62);

\node[anchor=west] at (0.07,0.60) {$v_1$};
\node[anchor=east] at (-0.35,-0.1) {$v_2$};
\node[anchor=west] at (0.35,-0.1) {$v_3$};
\end{scope}

\node at (8.25,0) {$+$};

\begin{scope}[shift={(9.45,0)}]
\draw (0,0) circle (0.92);

\node[vertex] (c1) at (0,0.48) {};
\node[vertex] (c2) at (-0.48,-0.30) {};
\node[vertex] (c3) at (0.48,-0.30) {};

\draw[edge] (c1)--(c2);

\draw[edge] (c1)--(-0.10,1.10);
\draw[edge] (c1)--( 0.10,1.10);

\draw[edge] (c2)--(-1.00,-0.38);
\draw[edge] (c2)--(-0.88,-0.62);

\draw[edge] (c3)--( 1.00,-0.38);
\draw[edge] (c3)--( 0.88,-0.62);

\node[anchor=west] at (0.07,0.60) {$v_1$};
\node[anchor=east] at (-0.35,-0.1) {$v_2$};
\node[anchor=west] at (0.35,-0.1) {$v_3$};
\end{scope}

\node at (5.20,-1.38) {Move (7)};
\end{scope}


\begin{scope}[shift={(0,-6.8)}]

\draw (0,0) circle (0.92);

\node[vertex] (v1) at (0,0.48) {};
\node[vertex] (v2) at (-0.48,-0.30) {};
\node[vertex] (v3) at (0.48,-0.30) {};

\draw[edge] (v1)--(v2)--(v3)--(v1)--cycle;

\draw[edge] (v1)--(-0.10,1.10);
\draw[edge] (v1)--( 0.10,1.10);

\draw[edge] (v2)--(-1.00,-0.38);
\draw[edge] (v2)--(-0.88,-0.62);

\draw[edge] (v3)--( 1.00,-0.38);
\draw[edge] (v3)--( 0.88,-0.62);

\node[anchor=west] at (0.07,0.60) {$v_1$};
\node[anchor=east] at (-0.35,-0.1) {$v_2$};
\node[anchor=west] at (0.35,-0.1) {$v_3$};

\node at (1.65,0) {$\longmapsto$};

\begin{scope}[shift={(3.35,0)}]
\draw (0,0) circle (0.92);

\node[vertex] (a12) at (-0.22,0.05) {};
\node[vertex] (a3)  at ( 0.48,-0.25) {};

\draw[edge] (a12)--(a3);

\draw[edge] (a12)--(-0.10,1.10);
\draw[edge] (a12)--( 0.10,1.10);
\draw[edge] (a12)--(-1.00,-0.38);
\draw[edge] (a12)--(-0.88,-0.62);

\draw[edge] (a3)--( 1.00,-0.38);
\draw[edge] (a3)--( 0.88,-0.62);

\node[anchor=east] at (-0.02,0.38) {$v_1=v_2$};
\node[anchor=west] at (0.35,-0.1) {$v_3$};
\end{scope}

\node at (5.20,0) {$+$};

\begin{scope}[shift={(6.40,0)}]
\draw (0,0) circle (0.92);

\node[vertex] (b1)  at (0,0.45) {};
\node[vertex] (b23) at (0,-0.18) {};

\draw[edge] (b1)--(b23);

\draw[edge] (b1)--(-0.10,1.10);
\draw[edge] (b1)--( 0.10,1.10);

\draw[edge] (b23)--(-1.00,-0.38);
\draw[edge] (b23)--(-0.88,-0.62);
\draw[edge] (b23)--( 1.00,-0.38);
\draw[edge] (b23)--( 0.88,-0.62);

\node[anchor=west] at (0.07,0.57) {$v_1$};
\node[anchor=north] at (0,-0.26) {$v_2=v_3$};
\end{scope}

\node at (8.25,0) {$+$};

\begin{scope}[shift={(9.45,0)}]
\draw (0,0) circle (0.92);

\node[vertex] (c2)  at (-0.48,-0.25) {};
\node[vertex] (c13) at ( 0.22,0.05) {};

\draw[edge] (c2)--(c13);

\draw[edge] (c2)--(-1.00,-0.38);
\draw[edge] (c2)--(-0.88,-0.62);

\draw[edge] (c13)--(-0.10,1.10);
\draw[edge] (c13)--( 0.10,1.10);
\draw[edge] (c13)--( 1.00,-0.38);
\draw[edge] (c13)--( 0.88,-0.62);

\node[anchor=east] at (-0.35,-0.1) {$v_2$};
\node[anchor=west] at (0.34,0.38) {$v_1=v_3$};
\end{scope}

\node at (5.20,-1.38) {Move (8)};
\end{scope}

\end{tikzpicture}

\caption{
Several grove reduction moves of \cite{gao2025electrical},
modified to Definition~\ref{defn:groves_moves}. Each diagram shows a local modification
inside a local disk. The external edges indicate arbitrary incident edges of the
double grove; when two vertices are identified, the identified vertex retains
the union of the incident edges of the two original vertices. In our
boundary-preserving version, any summand that identifies two distinct boundary
vertices is omitted. Thus, in Moves~(3) and~(4), contracting an edge whose
endpoints are both boundary vertices gives zero. In Move~(8), if exactly two
vertices of the triangle are boundary vertices, the term identifying those two
vertices is omitted; if all three vertices are boundary vertices, the reduction
is zero.
}
\label{fig:grove-reduction-moves}
\end{figure}

\begin{figure}[t]
\centering
\begin{tikzpicture}[
    every node/.style={font=\small},
    bvertex/.style={circle,fill=black,inner sep=1.5pt},
    ivertex/.style={circle,fill=black,inner sep=1.5pt},
    plain/.style={draw=black,line width=1pt},
    arr/.style={->,>=Stealth,line width=0.9pt},
    redlayer/.style={draw=red!75!black,line width=1.2pt},
    bluelayer/.style={draw=blue!75!black,line width=1.2pt,dashed}
]

\def\R{1.65}
\def\r{1.45}


\begin{scope}[xshift=0cm,yshift=0cm]
    \node at (0,2.20) {$H$};

    \draw[line width=0.8pt] (0,0) circle (\R);

    \node[bvertex] (h1) at (120:\R) {};
    \node[bvertex] (h2) at ( 60:\R) {};
    \node[bvertex] (h3) at (300:\R) {};
    \node[bvertex] (h4) at (240:\R) {};

    \node[above left=1pt of h1] {$1$};
    \node[above right=1pt of h2] {$2$};
    \node[below right=1pt of h3] {$3$};
    \node[below left=1pt of h4] {$4$};

    \node[ivertex] (hx) at (0,0.45) {};
    \node[below=4pt of hx] {$x$};

    \draw[plain] (h1)--(h2);
    \draw[plain] (h1)--(hx);
    \draw[plain] (h2)--(hx);
    \draw[plain] (h4)--(h3);
\end{scope}

\draw[arr] (1.86,0.05) -- (3.40,0.05)
    node[midway,above=4pt] {Move (8)};

\begin{scope}[xshift=5.20cm,yshift=0cm]
    \node at (0,2.40) {$0$};

    \draw[line width=0.8pt] (0,0) circle (\R);

    \node[bvertex] (m12) at (90:\R) {};
    \node[above=2.5pt of m12] {$1=2$};

    \node[bvertex] (m3) at (300:\R) {};
    \node[bvertex] (m4) at (240:\R) {};

    \node[below right=1pt of m3] {$3$};
    \node[below left=1pt of m4] {$4$};

    \node[ivertex] (mx) at (0,0.28) {};
    \node[below=4pt of mx] {$x$};

    \draw[plain] (m12)--(mx);
    \draw[plain] (m4)--(m3);
\end{scope}

\node at (7.20,0.05) {$+$};

\begin{scope}[xshift=9.80cm,yshift=0cm]
    \node at (0,2.20) {$2\xi$};

    \draw[line width=0.8pt] (0,0) circle (\R);

    \node[bvertex] (r1) at (120:\R) {};
    \node[bvertex] (r2) at ( 60:\R) {};
    \node[bvertex] (r3) at (300:\R) {};
    \node[bvertex] (r4) at (240:\R) {};

    \node[above left=1pt of r1] {$1$};
    \node[above right=1pt of r2] {$2$};
    \node[below right=1pt of r3] {$3$};
    \node[below left=1pt of r4] {$4$};

    \draw[plain] (r1)--(r2);
    \draw[plain] (r4)--(r3);
\end{scope}


\node at (5.25,-3.45) {\normalsize One compatible coloring};

\begin{scope}[xshift=1.90cm,yshift=-5.45cm]
    \node at (0,1.95) {$H$};

    \draw[line width=0.8pt] (0,0) circle (\r);

    \node[bvertex] (c1) at (120:\r) {};
    \node[bvertex] (c2) at ( 60:\r) {};
    \node[bvertex] (c3) at (300:\r) {};
    \node[bvertex] (c4) at (240:\r) {};

    \node[above left=1pt of c1] {$1$};
    \node[above right=1pt of c2] {$2$};
    \node[below right=1pt of c3] {$3$};
    \node[below left=1pt of c4] {$4$};

    \node[ivertex] (cx) at (0,0.38) {};
    \node[below=4pt of cx] {$x$};

    \draw[redlayer]  (c1)--(c2);
    \draw[redlayer]  (c1)--(cx);

    \draw[bluelayer] (c2)--(cx);
    \draw[bluelayer] (c4)--(c3);
\end{scope}

\draw[arr] (4.90,-5.45) -- (6.35,-5.45);

\begin{scope}[xshift=8.95cm,yshift=-5.45cm]
    \node at (0,1.95) {$\xi$};

    \draw[line width=0.8pt] (0,0) circle (\r);

    \node[bvertex] (d1) at (120:\r) {};
    \node[bvertex] (d2) at ( 60:\r) {};
    \node[bvertex] (d3) at (300:\r) {};
    \node[bvertex] (d4) at (240:\r) {};

    \node[above left=1pt of d1] {$1$};
    \node[above right=1pt of d2] {$2$};
    \node[below right=1pt of d3] {$3$};
    \node[below left=1pt of d4] {$4$};

    \draw[redlayer]  (d1)--(d2);
    \draw[bluelayer] (d4)--(d3);
\end{scope}

\end{tikzpicture}

\caption{A boundary triangle reduction and a compatible coloring.
In the top row, the triangular face of \(H\) has boundary vertices \(1,2\)
and interior vertex \(x\). By the boundary-adapted version of Move~(8), the
summand identifying the two boundary vertices \(1\) and \(2\) is omitted.
The other two identifications give the same reduced diagram \(\xi\), so
\(
\alpha(H)=2\xi.
\)
The bottom row shows one compatible coloring of \(H\) into two grove layers
and the induced coloring of \(\xi\). In this example, the red layer pairs
\(1\) with \(2\), the blue layer pairs \(3\) with \(4\), and each boundary
vertex is a singleton in the other layer.}
\label{fig:boundary_triangle_colored}
\end{figure}

\begin{defn}\label{defn:excess}
Let $H$ be a graph with node set $N$. For $v\in V(H)$, let
$d_H(v)$ denote the degree of $v$, with edges counted with multiplicity.

We define the \emph{excess} of $H$ by
\[
\operatorname{exc}(H)
:=
\sum_{v\in V_{\mathrm{int}}(H)}
\bigl(d_H(v)-4\bigr)
+
\sum_{i\in N}
\bigl(d_H(i)-1\bigr).
\]
\end{defn}

\begin{lem}\label{lem:excess}
The excess is preserved under the grove reduction moves in
Definition~\ref{defn:groves_moves}. More precisely, suppose that
\[
\alpha(H)\longmapsto \sum_{\beta} c_{\beta}\alpha(H_{\beta}).
\]
where the right-hand side denotes the intermediate sum produced by several reduction moves. Then, for every $\beta$ with $c_{\beta}\neq 0$,
\(
\operatorname{exc}(H_{\beta})
=
\operatorname{exc}(H).
\)
\end{lem}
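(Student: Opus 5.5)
The plan is to replace the excess by a global edge–vertex count, so that the verification becomes a simple bookkeeping of how many edges and interior vertices each move removes. By the handshake lemma, with edges counted with multiplicity and a loop contributing $2$ to the degree of its vertex, we have $\sum_{v\in V(H)} d_H(v)=2|E(H)|$. Since the node set $N$ has $2m$ elements and is never changed by a move that produces a nonzero term, we get
\[
\operatorname{exc}(H)=2|E(H)|-4|V_{\mathrm{int}}(H)|-2m
=2\bigl(|E(H)|-2|V_{\mathrm{int}}(H)|\bigr)-2m .
\]
So it suffices to show that every single move in Definition~\ref{defn:groves_moves} that yields a term with nonzero coefficient lowers $|E|$ by exactly $2$ and lowers $|V_{\mathrm{int}}|$ by exactly $1$. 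The statement for an intermediate sum produced by several moves then follows by induction on the number of moves. Moves (0), (2) and (5), and the vanishing cases of (3), (4) and (8), produce only zero terms, so they impose no condition; move (1) is terminal.

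I would then check the remaining moves one by one.

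\begin{itemize}
\item In move (3), a double edge with at least one interior endpoint is contracted. This removes two units of edge multiplicity. The two endpoints merge into one vertex, which is a node if one endpoint was a node and is interior otherwise; in either case $|V_{\mathrm{int}}|$ drops by one.
\item In move (4) with $r=2$, the edge $e_1$ is contracted and $e_2$ is deleted. This removes two edges and, since not both endpoints are nodes, one interior vertex.
\item In move (6), the interior vertex $v$ and its two edges are deleted.
\item In move (7), the interior vertex $v$ and three edges are deleted and one edge is added, for a net loss of two edges.
\item In move (8), the three triangle edges are replaced by one edge. The surviving summands identify $v_i$ with $v_{i+1}$ only when at least one of them is interior, because the summand identifying two nodes is omitted. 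Hence exactly one interior vertex disappears.
\end{itemize}

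The only subtle point, which I would treat explicitly, concerns edges other than those named in a move. When two vertices are identified in moves (3), (4) or (8), any further parallel edges between them become loops, and edges elsewhere are unaffected. These extra edges keep their multiplicity, so they contribute nothing to the change in $|E|$. The degree of the merged vertex is the sum of the old degrees, minus the contributions of the removed edges, and this agrees with the handshake formula because loops are counted twice. A loop created this way would make the term vanish later under move (2), but that does not affect the identity at the current step. With this observation, each nonzero summand $H_\beta$ satisfies $|E(H_\beta)|-2|V_{\mathrm{int}}(H_\beta)|=|E(H)|-2|V_{\mathrm{int}}(H)|$, and hence $\operatorname{exc}(H_\beta)=\operatorname{exc}(H)$.
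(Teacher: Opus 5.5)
Your proposal is correct and follows essentially the same route as the paper: use the handshake lemma to rewrite the excess as $2|E(H)|-4|V_{\mathrm{int}}(H)|-|N|$, then observe that every nonzero summand of a move lowers $|E|$ by $2$ and $|V_{\mathrm{int}}|$ by $1$, and never merges two nodes. Your move-by-move check, the convention that a loop contributes $2$ to the degree, and the explicit induction over several moves are more careful versions of what the paper states in a single line.
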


\begin{proof}
We have
\(
\sum_{v\in V(H)}d_H(v)=2|E(H)|.
\)
Hence
\begin{align*}
\operatorname{exc}(H)
&=
\sum_{v\in V_{\mathrm{int}}(H)}
\bigl(d_H(v)-4\bigr)
+
\sum_{i\in N}
\bigl(d_H(i)-1\bigr)\\
&=
2|E(H)|-4|V_{\mathrm{int}}(H)|-|N|.
\end{align*}

Consider any nonzero term produced by a grove algebra reduction move $(3), (4), (6), (7)$ and  $ (8)$.
Every move which changes the graph removes one interior vertex and
decreases the number of edges, counted with multiplicity, by two.
Thus
\[
\Delta |E|=-2,
\qquad
\Delta |V_{\mathrm{int}}|=-1.
\]
Moreover, by the modified boundary rules, no nonzero term identifies
distinct boundary vertices, so $|N|$ is unchanged (though an interior vertex may be identified to a boundary vertex). Therefore
\[
\Delta\operatorname{exc}
=
2(-2)-4(-1)=0.
\]
Thus the excess is preserved under each nonzero reduction move, and hence
under any sequence of reductions.
\end{proof}

\begin{prop} \label{prop:reduction_3_matching}
Let $\mathscr P=(R_1,S_1,R_2,S_2)$ be an ordered balanced
partition of $N$, and let $(F_1,F_2)$ be a double grove
compatible with $\mathscr P$. Then, every complete
grove reduction of $F_1\cup F_2$ is a linear
combination of $3$-noncrossing matching graphs, and all $3$-noncrossing matching graphs are reduced. 
\end{prop}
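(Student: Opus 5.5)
The plan is to control everything through the excess of Definition~\ref{defn:excess}. First I compute the excess of the initial superposition $H=F_1\cup F_2$. Let $k_\ell=|R_\ell|=|S_\ell|$, so $k_1+k_2=m$. The grove $F_\ell$ is a spanning forest with $k_\ell$ two-node components and $2m-2k_\ell$ singleton components, so $|E(F_\ell)|=|V|-(2m-k_\ell)$. Counting edges with multiplicity,
\[
|E(H)|=2|V|-4m+m=2|V_{\mathrm{int}}|+m .
\]
By the formula $\operatorname{exc}(H)=2|E(H)|-4|V_{\mathrm{int}}(H)|-|N|$ from the proof of Lemma~\ref{lem:excess}, this gives $\operatorname{exc}(H)=0$. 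Lemma~\ref{lem:excess} then says that every graph $H_\beta$ appearing with nonzero coefficient in any reduction sequence also has excess $0$. I will also record that every move preserves embeddability in the disk with $N$ on the boundary. Contraction and deletion clearly do. In Move~(7) the new edge $v_{i+1}v_{i+2}$ can be drawn through the region vacated by $v$, and in Move~(8) the triangle is a face. Since $H\subseteq G$ starts out planar, all intermediate graphs are planar.

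Next I classify nonzero reduced graphs $H$. Since no move applies, the following hold.
\begin{itemize}
\item All multiplicities are $1$, by Moves~(0) and~(3).
\item There are no parallel edges, by Move~(4), and no loops, by Move~(2).
\item No boundary vertex is isolated, by Move~(2).
\item No interior vertex has degree $0$ or $1$, by Move~(5), or degree $2$, by Move~(6), since simplicity forces distinct neighbours.
\item No interior vertex has degree $3$, by Move~(7).
\item There are no triangular faces, by Move~(8).
\end{itemize}
Hence $d(v)\ge 4$ for interior $v$ and $d(i)\ge 1$ for $i\in N$. Excess $0$ then forces every interior vertex to be $4$-valent and every node to be a leaf.

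From such an $H$ I build the chord diagram by following strands: at each $4$-valent vertex a strand continues to the opposite edge in the cyclic order. I must show three things.
\begin{enumerate}
\item There are no closed strands, and no strand meets itself.
\item Two strands meet at most once.
\item No three strands pairwise cross.
\end{enumerate}
Given these, $H$ is the $3$-noncrossing matching graph of the resulting matching. I expect this to be the main obstacle.

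My approach is an Euler characteristic count on the disk. Any violation of (1) or (2) bounds a disk region $D$ whose boundary consists of strand segments with at most two corners. Every vertex inside $D$ is $4$-valent, so Euler's formula gives
\[
\sum_{f\subset D}(4-\deg f)=4-\#\{\text{corners}\}>0 .
\]
Hence some face in $D$ has degree at most $3$. A face of degree $1$ or $2$ is a loop or a double edge, and a face of degree $3$ is a triangle; all three are excluded. Similarly, three pairwise crossing strands cut out a triangular region with three corners. An innermost such configuration should, by the same count and induction on the number of faces, contain a triangular face; failing that, I will adapt the corresponding argument of \cite{gao2025electrical} on the medial side.

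For the converse statement, let $\xi$ be a $3$-noncrossing matching graph, drawn with chords in general position so that any two crossing chords meet exactly once. Then $\xi$ is simple, because two chords cross at most once. It has no loops, interior vertices of degree exactly $4$, and boundary vertices of degree exactly $1$. So Moves~(0) and~(2)--(7) do not apply. A triangular face would be bounded by segments of three chords that pairwise cross, which is impossible in a $3$-noncrossing diagram, so Move~(8) does not apply either. Hence $\xi$ is reduced, which is also consistent with rule~(1).
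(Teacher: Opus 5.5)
Your proposal is correct and follows essentially the same route as the paper. Both arguments show that $F_1\cup F_2$ has excess zero, use invariance of the excess under the moves to force $4$-valent interior vertices and leaf nodes, and then apply an Euler face count inside a region bounded by strand segments with at most three corners; you add an explicit planarity-preservation remark and a more detailed check that $3$-noncrossing matching graphs are reduced.

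Your hedge in the three-strand case is unnecessary. Your own count already settles it once it is written as the inequality $\sum_{f\subset D}(4-\deg f)\ge 4c-k$. The inequality form accounts for corners whose other two edges point into $D$ and for components inside $D$ that do not meet the boundary curve. With $k=3$ it gives $\ge 4-3=1>0$ directly, exactly as in the paper, so no induction or appeal to Gao--Lam--Xu is needed.
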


\begin{proof}
Let $H=F_1\cup F_2$, with edges counted with multiplicity. Since each
$F_\ell$ is a spanning forest,
\[
\sum_{v\in V}\deg_{F_\ell}(v)=2|V|-2c_\ell,
\]
where $c_\ell$ is the number of components of $F_\ell$.

If $p_\ell$ and $s_\ell$ denote the numbers of paired and singleton boundary
blocks of $F_\ell$, then
\(
c_\ell=p_\ell+s_\ell.
\)
Since every boundary vertex is paired in exactly one of the two groves
and is a singleton in the other,
\(
p_1+p_2=m,
s_1+s_2=2m,
\)
and hence
\(
c_1+c_2=3m.
\)
Therefore
\[
\sum_{v\in V} d_H(v)
=
4|V|-6m.
\]
Since $|N|=2m$, it follows that
\[
\operatorname{exc}(H)
=
\sum_{v\in V_{\mathrm{int}}}(d_H(v)-4)
+
\sum_{i\in N}(d_H(i)-1)
=0.
\]

By the preceding lemma, every nonzero term $H'$ obtained after reduction
also has
\(
\operatorname{exc}(H')=0.
\)
In a nonzero reduced term, every interior vertex has degree at least $4$;
if it had degree at most $3$, then one of Rules (3)--(7) would apply.
Every boundary vertex has degree at least $1$, by Rule (2). Thus, every summand in
\[
\operatorname{exc}(H')
=
\sum_{v\in V_{\mathrm{int}}(H')}(\deg_{H'}(v)-4)
+
\sum_{i\in N}(\deg_{H'}(i)-1)
\]
is nonnegative. Since their sum is zero, each must vanish. Hence every
interior vertex of $H'$ has degree $4$ and every boundary vertex has
degree $1$.

Finally, the reduction move $(8)$ eliminates triangular faces. Thus, every
nonzero reduced term has $1$-valent boundary vertices, $4$-valent
interior vertices, and no triangular faces. 

Following opposite edges through each interior vertex partitions the
edges uniquely into strands. Each strand either joins two boundary
vertices or is closed. We claim that
\begin{enumerate}
\item[$(i)$] there are no closed strands;
\item[$(ii)$] any two distinct strands cross at most once;
\item[$(iii)$] no three strands pairwise cross; and
\item[$(iv)$] no strand crosses itself.
\end{enumerate}

Suppose that one of \((i)\)--\((iv)\) fails. Then the forbidden strand
configuration contains a simple closed curve \(\gamma\), formed from strand
segments, which bounds a simply connected region \(R\) and has at most
three corners. By a corner of \(\gamma\), we mean a vertex at which the two
edges of \(\gamma\) are adjacent in the planar cyclic order, rather than
opposite. Such a curve can be chosen with no corners in the presence of a
closed strand, with one corner in the presence of a self-intersection, with
two corners if two strands cross more than once, and with three corners if
three strands pairwise cross. Choose \(R\) innermost among all such regions.

Consider the subgraph consisting of \(\gamma\) together with all vertices
and edges contained in \(R\). Let \(v,e,f,c\) denote its numbers of
vertices, edges, bounded faces, and connected components, respectively,
and let \(b\) be the number of vertices lying on \(\gamma\). Every vertex
strictly inside \(R\) has degree \(4\). A noncorner vertex on \(\gamma\)
has degree at least \(3\), while at each corner, the degree is at least $2$. Since there are \(k\leq 3\) corners,
\[
2e\geq 4(v-b)+3(b-k)+2k
=4v-b-k.
\]

Euler's formula gives
\(
v-e+f=c.
\)
Moreover, every interior edge is incident to two bounded faces, whereas
each edge of \(\gamma\) is incident to only one. Since \(\gamma\) has
\(b\) edges,
\[
\sum_Q \deg(Q)=2e-b,
\]
where the sum is over the bounded faces \(Q\). Therefore
\[
\sum_Q \bigl(4-\deg(Q)\bigr)
=4f-2e+b
=4c-4v+2e+b
\geq 4c-k>0.
\]
The last inequality follows since $c\geq 1,k\leq 3$. Hence, at least one bounded face has degree at most \(3\), contradicting
reducedness. Thus \((i)\)--\((iv)\) all hold.

Hence, the resulting matching is
$3$-noncrossing that uses all edges in the reduced graph. One can verify that none of the reduction moves apply to a $3$-noncrossing matching graph, and hence it is reduced.
\end{proof}

\begin{prop}\label{prop:coloring_proposition}
Let \(H\) be an uncolored double-grove superposition and write
\[
\alpha(H)=\sum_{\xi\in\mathrm{TC}(2m)}a_\xi\,\xi.
\]
Recall the definition of $T_H$ (Definition \ref{defn:invariant_assigned}).
Then
\[
T_H
=
\sum_{\xi\in\mathrm{TC}(2m)}
a_\xi\,T_\xi.
\]
\end{prop}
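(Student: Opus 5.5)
The approach is induction along a complete reduction sequence, using the coloring definition of $T_H$ from Definition~\ref{defn:invariant_assigned} directly. It suffices to prove that each move of Definition~\ref{defn:groves_moves} is compatible with coloring tensors. Concretely, if a move sends $H$ to $\sum_\beta c_\beta H_\beta$, I will show
\[
T_H(v_{\mathscr P})=\sum_\beta c_\beta\,T_{H_\beta}(v_{\mathscr P})
\]
for every ordered balanced partition $\mathscr P$, and that $T_H=0$ whenever the move outputs $0$. Since the moves strictly decrease vertices or edges and terminate in elements of $\mathrm{TC}(2m)$ by Proposition~\ref{prop:reduction_3_matching}, where rule (1) gives $T_\xi$ itself, the claim then follows by induction. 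Because $T_H$ is determined by its values on the basis tensors $v_{\mathscr P}$, with unbalanced values zero on both sides, everything reduces to a local bijective or counting argument on colorings. In each case the boundary data $\mathscr P$, and hence $\epsilon_{\mathscr P}$, is unchanged, so only the pairings $\rho_\ell(c)$ need to be tracked.

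The zero rules are short. For rule (0), an edge of multiplicity $\geq3$ cannot lie in only two groves, so $\operatorname{Col}(H;\mathscr P)=\emptyset$. For rule (2), a loop cannot lie in a forest. An isolated boundary vertex would be a singleton in both layers, but each node lies in $R_\ell\cup S_\ell$ for exactly one $\ell$, so it must be paired in one layer; this is impossible. For rule (5), an interior leaf lies in only one layer, so in the other layer it forms a component with no boundary vertex, which is impossible; degree $0$ is similar. For a double edge between two nodes, or parallel single edges between two nodes, both layers would join the two nodes, or one layer would form a cycle, so the coloring set is again empty.

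The nonzero moves are as follows.
\begin{itemize}
\item Rule (3): a double edge lies in both layers, so contracting it gives a bijection of colorings that preserves components. This fails only if both endpoints are nodes, in which case both layers join two nodes, which is impossible.
\item Rule (4) with $r=2$: the two parallel edges must have different colors, since otherwise there is a cycle. Contracting one and deleting the other gives a $2$-to-$1$ map onto colorings of $H'$, where the remaining edge carries the merged color. With $r\geq 3$, some color repeats and creates a cycle.
\item Rule (6): a degree-$2$ interior vertex $v$ must see both colors, otherwise it is isolated in one layer. Removing $v$ changes no boundary connectivity, and the two colorings of the path correspond to one coloring of $H'$, giving the factor $2$.
\end{itemize}

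The main obstacle is rules (7) and (8), where the colorings of $H$ must be partitioned among the $H_i$ with matching signs. For rule (7), $v$ has degree $3$, so it has two edges of one color, say color $1$ on edges $vv_j,vv_k$, and one edge of color $2$ on $vv_i$. In layer $2$, $v$ is a pendant of $v_i$. In layer $1$, $v$ joins $v_j$ and $v_k$, so replacing the path $v_j v v_k$ by the edge $v_jv_k$ of color $1$ gives a coloring of $H_i$ with the same component structure on the nodes. Conversely, a coloring of $H_i$ recovers $v$ uniquely, with the pendant edge in the other color. This gives a bijection $\operatorname{Col}(H)\cong\bigsqcup_i\operatorname{Col}(H_i)$ preserving every $\rho_\ell$.

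For rule (8), I expect a similar case analysis on the colors of the triangle edges. Acyclicity forbids a monochromatic triangle, so the triangle has two edges of one color and one of the other. Contracting the single-color edge merges $v_i$ with $v_{i+1}$. The remaining two edges of the other color then become parallel, and are replaced by one edge; I need to check that this merged edge carries the correct color and that the connectivity of both layers is preserved. The boundary cases must also be checked. When the single-color edge joins two nodes, both nodes would be joined in that layer while the other layer also passes through them. I will need to verify that such colorings are either impossible or vanish, so that dropping the term $H_1$ is consistent, and similarly that three boundary vertices forces an empty coloring set. I would first try to verify this by a careful local analysis of which components of each layer pass through the triangle, checking that the map to $\bigsqcup\operatorname{Col}(H_i)$ is a sign-preserving bijection.
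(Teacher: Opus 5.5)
Your strategy is the paper's. You check move by move that $T_H(v_{\mathscr P})$ obeys the same linear relation as $\alpha$, using local bijections or $2$-to-$1$ maps that preserve the boundary connectivity of each layer (hence $\rho_1,\rho_2$ and the sign), and then induct along the reduction sequence. Your treatment of rules (0)--(7) matches the paper's.

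There is one small slip in rule (4). After contracting $e_1$ and deleting $e_2$, $H'$ has no edge left between the merged endpoints. What matters is that the merged vertex is joined in both layers. The two lifts come from choosing which of $e_1,e_2$ goes into which layer.

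The one piece you leave unfinished is rule (8), which you call the main obstacle. It closes with facts you already have.

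\textbf{The merged edge and the bijection.} Suppose $v_iv_{i+1}$ is the uniquely colored edge, with color $b$, so the other two triangle edges have color $a$. The single new edge of $H_i$ from $v_i=v_{i+1}$ to $v_{i+2}$ gets color $a$.
\begin{itemize}
\item In layer $b$, the map is just contraction of a tree edge.
\item In layer $a$, deleting the path $v_iv_{i+2}v_{i+1}$ splits its tree into three subtrees. Identifying $v_i$ with $v_{i+1}$ and adding one edge to $v_{i+2}$ reassembles them into a single tree with the same boundary vertices.
\item Conversely, take a coloring of $H_i$. Split the merged vertex according to the incidences recorded in $H$. Insert $v_iv_{i+1}$ in the color opposite to the new edge, and the path $v_iv_{i+2}v_{i+1}$ in the new edge's color. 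This gives the unique preimage, and both layers remain forests with unchanged boundary blocks.
\end{itemize}
Hence $\operatorname{Col}(H;\mathscr P)=\bigsqcup_i\operatorname{Col}(H_i;\mathscr P)$, with every $\rho_\ell$ preserved.

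\textbf{Boundary cases.} If $v_1,v_2$ are nodes and $v_1v_2$ is the uniquely colored edge, then $v_1,v_2$ share a component in layer $b$ (via the edge) and in layer $a$ (via $v_3$). So both nodes would lie in $R_1\cup S_1$ and in $R_2\cup S_2$, which is impossible. This is the same disjointness argument you used for a double edge between nodes, and it shows that dropping $H_1$ is exact. If all three vertices are nodes, the two same-colored edges put three nodes in one component, which is also forbidden.

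With this filled in, your argument is complete and coincides with the paper's proof.
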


\begin{proof}
It suffices to verify that each reduction move in
Definition~\ref{defn:groves_moves} preserves the admissible
signed coloring count for a fixed boundary tensor $v_{\mathscr P}$.

The zero rules admit no compatible grove decompositions.
Indeed, a loop cannot belong to either forest, and an edge of
multiplicity at least three forces a repeated edge in one layer.
An isolated boundary vertex cannot be in a grove component with another boundary vertex.
An interior vertex of degree $1$ can only be incident to an edge from one of the two layers.

For distinct parallel edges, each layer can contain at most one,
since two would form a cycle. Thus three or more parallel edges
admit no compatible decomposition. For exactly two parallel edges,
one belongs to each layer. Contracting them gives two lifts of
each compatible coloring of the reduced graph, accounting for
the factor $2$ in rule~(4). If their endpoints are both boundary
vertices, they would be in the same grove component in both layers, which is
forbidden. The same observation applies to a double edge
joining two boundary vertices.

For a double edge, the two copies must belong to the two different
grove layers. Contracting the double edge therefore gives a bijection
between admissible colorings before and after the contraction.

If an interior vertex has degree $2$, then its two incident edges must lie
in different grove layers. Any admissible coloring of the graph excluding the two incident edges, extends in exactly two ways, depending on which incident
edge is assigned to which layer. This accounts for the factor
$2$ in Rule~(6).

If an interior vertex has degree $3$, then both grove layers must contain
an edge incident to the vertex. Hence, exactly two
of the three incident edges have the same color.
The three possible choices of this pair correspond precisely to the
three graphs \(H_1,H_2,H_3\) appearing in the reduction rule. Hence
\[
T_H(v_{\mathscr P})
=
T_{H_1}(v_{\mathscr P})+T_{H_2}(v_{\mathscr P})+T_{H_3}(v_{\mathscr P}).
\]

For the triangular-face reduction, every admissible coloring
has two edges of one color and one edge of the other color.
Contract the uniquely colored edge and replace the remaining
two edges with a single edge of their color. This preserves the boundary connectivity
in each layer. There are $3$ choices for the uniquely colored edge corresponding to the reduction terms.  Terms identifying two
boundary vertices admit no compatible colorings since those
vertices would be in the same connected component in both grove layers. If all three vertices are boundary vertices, the two
same-colored edges connect three boundary vertices,
so no admissible coloring exists.

Each local correspondence preserves the boundary connectivity
in each grove layer. Hence, it preserves the permutations
\(\rho_1(c)\) and \(\rho_2(c)\), and therefore the sign
\(\operatorname{sgn}_{\mathscr P}(c)\).
The coloring identities thus hold for the signed sums as well.

Thus, every single move reduction
\(
H\longmapsto \sum_\beta a_\beta H_\beta
\)
satisfies
\(
T_H(v_{\mathscr P})
=
\sum_\beta a_\beta T_{H_\beta}(v_{\mathscr P}).
\)
Since the equality holds over the sequence of reductions defining \(\alpha(H)\)
we have that
\[
T_H(v_{\mathscr P})
=
\sum_{\xi\in\mathrm{TC}(2m)}
a_\xi T_\xi(v_{\mathscr P}),
\]
\end{proof}

We now give a precise formulation of Theorem~\ref{thm:sp4_grove_expansion1}.
\begin{thm}\label{thm:sp4_grove_expansion}
Let \(G\) be a planar electrical network with \(2m\) boundary vertices.
Then
\[
Z_G
=
\frac{1}{Z_{\mathrm{unc}}^2}
\sum_{\xi\in \mathrm{TC}(2m)}
Z[\xi]\,T_\xi,
\]
where
\[
Z[\xi]
:=
\sum_H
w(H)\,[\xi]\alpha(H),
\qquad
w(H)
:=
\prod_{e\in E(G)} c_e^{\mu_H(e)}.
\]
Here the sum is over distinct uncolored superpositions
\(H=F_1\cup F_2\) arising from the double-grove construction above,
with edges counted with multiplicity, and
\(\mu_H(e)\in\{0,1,2\}\) is the multiplicity of \(e\) in \(H\).
Each superposition \(H\) is counted once, independently of the number of
compatible decompositions \(H=F_1\cup F_2\), and
\([\xi]\alpha(H)\) denotes the coefficient of \(\xi\) in its
grove reduction.
\end{thm}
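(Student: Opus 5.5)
Both sides are linear functionals on $W^{\otimes 2m}$, so I will compare them on the basis tensors $v_1\otimes\cdots\otimes v_{2m}$ with each $v_i\in\{e_1,e_2,f_1,f_2\}$. On an unbalanced basis tensor the left side vanishes, because every nonzero term of the Pfaffian needs $\omega(v_i,v_j)\neq 0$ on every pair. The right side vanishes there too, because each $T_\xi$ (being some $T_H$) is defined to be zero on unbalanced tensors. So it suffices to check equality on $v_{\mathscr P}$ for every ordered balanced partition $\mathscr P=(R_1,S_1,R_2,S_2)$.

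\textbf{Step 1 (expand $Z_G(v_{\mathscr P})$ over superpositions).} The display preceding Definition~\ref{defn:groves_moves}, which combines Lemma~\ref{lem:sp4_test_evaluation} with Theorem~\ref{thm:grove_expansion} and the sign cancellation from the proof of Lemma~\ref{lem:grove_sp4}, gives
\[
Z_{\mathrm{unc}}^2 Z_G(v_{\mathscr P})=\sum_{(F_1,F_2)}\operatorname{sgn}_{\mathscr P}(F_1,F_2)\,w(F_1\cup F_2),
\]
summed over double groves compatible with $\mathscr P$. Here the weight $\prod_{e\in F_1}c_e\prod_{e\in F_2}c_e$ equals $w(H)$ for $H=F_1\cup F_2$, since it depends only on the multiplicities. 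I group the terms by the uncolored superposition $H$. The double groves $(F_1,F_2)$ compatible with $\mathscr P$ that superpose to a fixed $H$ are exactly the admissible colorings in $\operatorname{Col}(H;\mathscr P)$, and their signs are $\operatorname{sgn}_{\mathscr P}(c)$. So the inner sum is $T_H(v_{\mathscr P})$, and
\[
Z_{\mathrm{unc}}^2 Z_G(v_{\mathscr P})=\sum_H w(H)\,T_H(v_{\mathscr P}).
\]
I let $H$ range over all distinct superpositions arising from any balanced partition. This is harmless: if $H$ has no coloring compatible with the current $\mathscr P$, then $T_H(v_{\mathscr P})=0$. Each $H$ is therefore counted once, as the statement requires.

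\textbf{Step 2 (reduce).} Proposition~\ref{prop:coloring_proposition} gives $T_H=\sum_\xi [\xi]\alpha(H)\,T_\xi$, where $\alpha(H)$ is the output of the chosen reduction sequence; Proposition~\ref{prop:reduction_3_matching} guarantees that this output lies in $\mathbb Z\mathrm{TC}(2m)$. Substituting and exchanging the finite sums yields
\[
Z_{\mathrm{unc}}^2 Z_G(v_{\mathscr P})=\sum_\xi \Bigl(\sum_H w(H)[\xi]\alpha(H)\Bigr)T_\xi(v_{\mathscr P})=\sum_\xi Z[\xi]\,T_\xi(v_{\mathscr P}).
\]
Dividing by $Z_{\mathrm{unc}}^2$, which is positive since $\Delta_{II}$ is invertible, and extending by linearity proves the identity.

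\textbf{Main obstacle.} The substantive point is well-definedness. $Z[\xi]$ depends on $\alpha(H)$, and at this stage $\alpha(H)$ is defined only relative to a chosen reduction sequence. The identity itself holds for any fixed choice, because Proposition~\ref{prop:coloring_proposition} holds for every sequence. Independence of the choice, and hence canonical coefficients, requires that the $T_\xi$ be linearly independent (Corollary~\ref{cor:sp4_basis}). I would state the theorem for an arbitrary fixed choice of sequences and note that the coefficients become canonical once independence is known.

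For independence I would try a triangularity argument. For each $\xi$, pick the test tensor $v_{\mathscr P}$ determined by a two-coloring of the chords of $\xi$ in which crossing chords receive different layers; such a coloring exists because $\xi$ is $3$-noncrossing. Then order $\mathrm{TC}(2m)$ so that the matrix $(T_\xi(v_{\mathscr P_\eta}))$ is triangular with nonzero diagonal. Combined with the dimension count (the number of $3$-noncrossing matchings equals $\dim\operatorname{Hom}_{\Sp(4)}(W^{\otimes 2m},\mathbb C)$, matching Kuperberg's basis), this gives the basis property. I expect the diagonal entry and the triangularity to be the delicate part, because of sign cancellations among colorings, as in the earlier example where $T_H(v)=-4$.
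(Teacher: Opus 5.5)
Your Steps 1 and 2 are exactly the paper's proof: restrict to balanced $v_{\mathscr P}$, group compatible double groves by their superposition to get $\sum_H w(H)\,T_H(v_{\mathscr P})$, then apply Proposition~\ref{prop:coloring_proposition} and swap the sums. As you note, the identity holds for any fixed choice of reduction sequences.

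The independence discussion is not needed for this statement; the paper derives independence afterwards from this very theorem in Corollary~\ref{cor:sp4_basis}. Your triangularity sketch would also fail as written. A $3$-noncrossing matching's crossing graph is triangle-free but need not be bipartite: the chords $\{1,4\},\{3,6\},\{5,8\},\{7,10\},\{2,9\}$ cross in a $5$-cycle. So the proposed two-coloring of strands need not exist.
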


\begin{proof}
It suffices to verify the identity on basis tensors with factors
in \(\{e_1,e_2,f_1,f_2\}\). Both sides vanish on unbalanced
basis tensors, so fix an ordered balanced partition
\(\mathscr P=(R_1,S_1,R_2,S_2)\) of \(N\).

By Lemma~\ref{lem:sp4_test_evaluation} and
Theorem~\ref{thm:grove_expansion}, grouping ordered grove
pairs by their uncolored superposition gives
\[
Z_{\mathrm{unc}}^2Z_{G}(v_{\mathscr P})
=
\sum_H w(H)
\sum_{c\in\mathrm{Col}(H;\mathscr P)}
\operatorname{sgn}_{\mathscr P}(c)
=
\sum_H w(H)\,T_H(v_{\mathscr P}).
\]
Here \(H\) ranges over the superpositions in the theorem,
each counted once. If \(H\) admits no coloring compatible
with \(\mathscr P\), its contribution is zero.

By Proposition~\ref{prop:coloring_proposition},
\[
T_H(v_{\mathscr P})
=
\sum_{\xi\in\mathrm{TC}(2m)}
[\xi]\alpha(H)\,T_\xi(v_{\mathscr P}).
\]
Substituting and changing the order of the sums yields
\[
\begin{aligned}
Z_{\mathrm{unc}}^2Z_{G}(v_{\mathscr P})
&=
\sum_H w(H)
\sum_{\xi\in\mathrm{TC}(2m)}
[\xi]\alpha(H)\,T_\xi(v_{\mathscr P})\\
&=
\sum_{\xi\in\mathrm{TC}(2m)}
\left(\sum_H w(H)[\xi]\alpha(H)\right)
T_\xi(v_{\mathscr P})\\
&=
\sum_{\xi\in\mathrm{TC}(2m)}
Z[\xi]\,T_\xi(v_{\mathscr P}).
\end{aligned}
\]
The identity therefore holds on every basis tensor, and
hence on \(W^{\otimes 2m}\) by linearity. Dividing by
\(Z_{\mathrm{unc}}^2\) proves the theorem.
\end{proof}

\begin{cor}\label{cor:sp4_basis}
The tensors
\(
\{T_\xi:\xi\in\mathrm{TC}(2m)\}
\)
form a basis of
\(
U=\operatorname{Hom}_{\Sp(4)}
\bigl(W^{\otimes 2m},\mathbb C\bigr).
\)
\end{cor}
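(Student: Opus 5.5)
The plan is to prove spanning and then conclude by a dimension count. By the first fundamental theorem of invariant theory for $\Sp(2n)$, the pairing tensors $T_\pi$, $\pi\in\mathcal M(2m)$, of Definition~\ref{def:pairing_tensors} span $U$. As recalled in the introduction, Kuperberg's $\Sp(4)$ web basis of $U$ is indexed by the same $3$-noncrossing matching diagrams, so
\[
\dim U=|\mathrm{TC}(2m)|.
\]
It therefore suffices to show that every $T_\pi$ lies in the span of $\{T_\xi:\xi\in\mathrm{TC}(2m)\}$. Since Theorem~\ref{thm:sp4_grove_expansion} puts every $Z_G$ (for $G$ planar) in that span, it is enough to show that each $T_\pi$ lies in the span of the tensors $Z_G$ as $G$ ranges over planar networks.

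First, view $Z_G$ as a polynomial in the off-diagonal entries of $L$. The diagonal entries of $B_G(v)$ vanish because $\omega(v_i,v_i)=0$, so only $L_{ij}$ with $i<j$ appear. By Lemma~\ref{lem:pairing_basis},
\[
Z_G=\sum_{\pi}(-1)^{\operatorname{cr}(\pi)+m}\Bigl(\prod_{\{i,j\}\in\pi}L_{ij}\Bigr)T_\pi .
\]
Distinct matchings $\pi$ give distinct squarefree monomials in the independent variables $x_{ij}=L_{ij}$, $i<j$. Define the polynomial map
\[
\Phi(x)=\sum_\pi(-1)^{\operatorname{cr}(\pi)+m}\Bigl(\prod_{\{i,j\}\in\pi} x_{ij}\Bigr)T_\pi\in U .
\]
If $\Phi(x)$ lies in a fixed subspace $V\subseteq U$ for all $x$ in a nonempty open subset of $\mathbb R^{\binom{2m}{2}}$, then every coefficient $T_\pi$ lies in $V$. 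To see this, compose with any linear functional vanishing on $V$: the resulting polynomial vanishes on an open set, hence identically, so all its coefficients vanish.

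Second, take $V=\operatorname{span}\{T_\xi\}$ and supply the open set. Response matrices are symmetric with zero row sums, so they are determined by their off-diagonal entries. By the Curtis--Ingerman--Morrow characterization, the response matrices of well-connected circular planar networks are exactly those whose circular minors all have the appropriate strict sign. These strict inequalities cut out a nonempty open subset of the $\binom{2m}{2}$-dimensional space of off-diagonal entries. Every point of this set is the response matrix of some planar $G$, so $\Phi(x)=Z_G\in V$ there. The first step then gives $T_\pi\in V$ for all $\pi$, hence $V=U$, and since $|\mathrm{TC}(2m)|=\dim U$ the $T_\xi$ form a basis.

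The main obstacle is this second step: one must check that planar response matrices fill an open set of full dimension in the off-diagonal coordinates, rather than a lower-dimensional locus on which some polynomial identity among the $T_\pi$ could hold. I would cite the circular-minor characterization from \cite{curtis1998circular} for this, noting that the sign convention for $L$ here may flip signs of minors without affecting openness. The dimension equality $\dim U=|\mathrm{TC}(2m)|$ is taken from Kuperberg \cite{kuperberg1996spiders}, equivalently from the enumeration of oscillating tableaux in \cite{chen2007crossings}.
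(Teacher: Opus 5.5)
Your proposal is correct and takes essentially the same route as the paper: Theorem~\ref{thm:sp4_grove_expansion} places every $Z_G$ in $\operatorname{span}\{T_\xi\}$; the Pfaffian expansion of Lemma~\ref{lem:pairing_basis}, with planar response matrices filling a nonempty open set of off-diagonal coordinates, extracts each $T_\pi$; the first fundamental theorem gives spanning; and $\dim U=|\mathrm{TC}(2m)|$ finishes. The differences are cosmetic: you use linear functionals vanishing on $V$ where the paper uses vectors $v$ killed by all $T_\xi$ (its $V^\perp=U^\perp$ step), and you cite the circular-minor characterization and Kuperberg/oscillating tableaux where the paper cites Curtis--Morrow and Rubey--Westbury; your dimension count also makes the inclusion $V\subseteq U$ from Lemma~\ref{lem:grove_sp4} unnecessary.
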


\begin{proof}
Let
\(
V:=\operatorname{span}\{T_\xi:\xi\in\mathrm{TC}(2m)\}.
\)
By Lemma~\ref{lem:grove_sp4}, \(V\subseteq U\).

Suppose that \(v\in W^{\otimes 2m}\) satisfies
\(
T_\xi(v)=0\) for every \(\xi\in\mathrm{TC}(2m).
\)
By Theorem~\ref{thm:sp4_grove_expansion},
\(
Z_G(v)=0
\)
for every circular planar electrical network \(G\).
On the other hand, Lemma~\ref{lem:pairing_basis} gives
\[
0
=
Z_G(v)
=
\sum_{\pi\in M(2m)}
(-1)^{\operatorname{cr}(\pi)+m}
T_\pi(v)
\prod_{\{i,j\}\in\pi}L_{ij}.
\]
The space of response matrices of circular planar networks contains a nonempty open subset of
\(
\mathbb R^{\binom{2m}{2}}
\)
\cite[Section~5.3]{curtis2000inverse}. Hence, the polynomial above vanishes in the entries
\(L_{ij}\) and \(i<j\).
Since distinct perfect matchings give distinct monomials,
\(
T_\pi(v)=0
\) for every \(\pi\in M(2m).
\)

By the first fundamental theorem for the symplectic group
\cite[Theorem~2.2]{rubey2015combinatorics}, the pairing tensors
\(T_\pi\) span \(U\).  Therefore every element of \(U\) vanishes on \(v\).
We have shown that
\(
V^\perp=U^\perp.
\)
Since \(V\subseteq U\), we have that \(V=U\).

Finally,
\(
\dim U=|\mathrm{TC}(2m)|
\)
by \cite[Theorems~2.13 and~2.18]{rubey2015combinatorics}.
Thus, the spanning family \(\{T_\xi\}\) has the correct size and is
therefore a basis.
\end{proof}

In particular, the reduction $\alpha(H)$ is independent of the
chosen reduction sequence. Indeed, if two sequences produce
$\sum_\xi a_\xi\xi$ and $\sum_\xi b_\xi\xi$, then
Proposition~\ref{prop:coloring_proposition} gives
\[
\sum_\xi a_\xi T_\xi = T_H = \sum_\xi b_\xi T_\xi.
\]
Since the tensors $T_\xi$ are linearly independent, we have
$a_\xi=b_\xi$ for every $\xi$.

\section*{Acknowledgments}
The author thanks his advisor, Richard Kenyon, for helpful
discussions and suggestions. The author used ChatGPT to assist with drafting and revising the manuscript, providing additional correctness checks of some mathematical arguments, and preparing figures. ChatGPT was not used to generate any novel mathematical ideas or results presented in the paper.

\bibliographystyle{alpha}
\bibliography{refs}
\end{document}